\documentclass[final]{myarticle}

\usepackage[T1]{fontenc}
\usepackage{dsfont}
\usepackage{mathrsfs}
\usepackage[colorlinks, citecolor=blue, linkcolor=blue]{hyperref}
\usepackage{xcolor}
\usepackage{mathscinet}
\usepackage{latexsym}
\usepackage{amsthm}
\usepackage{amssymb}
\usepackage{amsfonts}
\usepackage{amsmath}
\usepackage{longtable}
\usepackage{graphicx}
\usepackage{multirow}
\usepackage{multicol}
\usepackage{amsfonts, amsmath}
\usepackage{latexsym,bm,amsfonts,amssymb,pifont,mathbbol,bbm}
\usepackage{extarrows}
\usepackage{verbatim}
\allowdisplaybreaks
\usepackage[a4paper]{geometry}

\DeclareMathAlphabet{\mathpzc}{OT1}{pzc}{m}{it} 

\newtheorem{theorem}{Theorem}[section]
\newtheorem{thm}[theorem]{Theorem}
\newtheorem{lemma}[theorem]{Lemma}
\newtheorem{lem}[theorem]{Lemma}

\newtheorem{proposition}[theorem]{Proposition}
\newtheorem{prop}[theorem]{Proposition}

\theoremstyle{definition}

\theoremstyle{remark}
\newtheorem{remark}[theorem]{Remark}
\newtheorem{rem}[theorem]{Remark}
\numberwithin{equation}{section}

\newcommand{\set}[1]{\left\{#1\right\}}
\newcommand{\im}{\mathrm{i}}
\newcommand{\mi}{\mathrm{i}}
\newcommand{\abs}[1]{\left\vert#1\right\vert}
\newcommand{\dif}{\mathrm{d}}
\newcommand{\C}{\mathbb{C}}
\newcommand{\R}{\mathbb{R}}

\newcommand{\N}{\mathbb{N}}
\newcommand{\norm}[1]{\left\Vert#1\right\Vert}
\newcommand{\FH}{\mathfrak{H}}
\newcommand{\RE}{\mathbb{Re}}
\newcommand{\IM}{\mathbb{Im}}
\newcommand{\E}{\mathbb{E}}
\newcommand{\tensor}[1]{\mathsf{#1}}

\newcommand{\innp}[1]{\langle {#1}\rangle}

\date{}
\linespread{1.2}

\begin{document}
	
	\title[complex Berry-Ess\'een bound ]{An improved complex fourth moment theorem}
	\author{Huiping \textsc{Chen}}
	\address{LMAM, School of Mathematical Sciences, Peking University, Beijing, 100871, China}
	\email{chenhp@pku.edu.cn}
	
	\author{Yong \textsc{Chen}}
	\address{Corresponding author\\Center of Applied Mathematics, School of Mathematics and Statistics,
		Jiangxi Normal University, Nanchang, Jiangxi 330022, China}
	\email{zhishi@pku.org.cn}
	
	\author{Yong \textsc{Liu}}
	\address{LMAM, School of Mathematical Sciences, Peking University, Beijing, 100871, China}
	\email{liuyong@math.pku.edu.cn}
	
	\subjclass[2020]{60F05; 60G15; 60H05}
	
	\keywords{Complex Wiener chaos, Berry-Ess\'een bound, Fourth Moment Theorem. \\\indent 	
	We thank Prof. Xiaohong Lan for valuable comments and discussion. Y. Chen is supported by NSFC (No. 11961033). Y. Liu is supported by NSFC (No. 11731009, No. 12231002) and Center for Statistical Science, PKU}

	\begin{abstract}
		For a series of univariate or multivariate complex multiple Wiener-It\^{o} integrals, we appreciably improve the previously known contractions condition of complex Fourth Moment Theorem (FMT) and present a fourth moment type Berry-Ess\'een bound under Wasserstein distance.
		Note that in some special cases of univariate complex multiple Wiener-It\^{o} integral, the Berry-Ess\'een bound we acquired is optimal. A remarkable fact is that the Berry-Ess\'een bound of multivariate complex multiple Wiener-It\^{o} integral is related to the partially order of the index of the complex multiple Wiener-It\^{o} integral, which has no real counterparts as far as we know.  As an application, we explore the asymptotic property for the numerator of a ratio process which originates from the classical Chandler wobble model. 
	\end{abstract}
	
	\maketitle	
	\section{Introduction}
	
	In 1952, It\^o published the groundbreaking article \cite{ito} and established the theory of complex multiple Wiener-It\^o integrals (also called complex Wiener chaos random variables) with respect to a complex normal random measure. Since then, there has been renewed interest in theoretical research on complex Wiener-It\^o integrals. For instance, Hida concentrated on the theory of complex multiple Wiener integrals under the framework of complex white noise and from an analytical perspective in \cite[Chapter 6]{HD80}. In \cite{Camp15,chw17,cl17}, the authors proved the Fourth Moment Theorem (FMT) for complex multiple Wiener-It\^o integrals, which states that the convergence in law of a sequence of complex multiple Wiener-It\^o integrals to a complex Gaussian distribution is equivalent to the convergences of the absolute moment up to fourth. Additionally, the product formula and \"Ust\"unel-Zakai independence criterion for complex multiple Wiener-It\^o integrals were acquired in \cite{ch17}. In \cite{cl19}, the authors explored various properties of complex multiple Wiener-It\^o integrals, complex Ornstein-Uhlenbeck operators and semigroups. 
	
	The study of complex Gaussian fields is highly motivated by various applications. For instance, it is crucial to understand the asymptotic behavior of some functionals of complex  Wiener-It\^o integrals in probabilistic models of cosmic microwave background radiation, see \cite{Kamionkowski_1997,marinucci_peccati_2011}. The theory of complex Wiener-It\^o integrals can be used to analyze the stochastic cubic complex Ginzburg-Landau equation, one of the most significant nonlinear partial differential equations in applied mathematics and physics, see \cite{Hoshino2017,RevModPhys.74.99}. The Ornstein-Uhlenbeck process, introduced in \cite{arato1982linear,arato1962evaluation} to model the Chandler wobble or variation of latitude concerning the rotation of the earth, has also found widespread use in finance and econophysics. Techniques involving complex Wiener-It\^o integrals can be used to obtain statistical inference for parameter estimators of the Ornstein-Uhlenbeck process, such as consistency and asymptotic normality, see \cite{chw17,sty22} for example. In the field of communication and signal processing, noise is frequently assumed to be complex Gaussian noise, see \cite{Aghaei2008,BARONE2005,Matalkah2008,Reisenfeld2003} for more details.

	In this paper, we investigate the FMT and  Berry-Ess\'een bound for complex Wiener chaos random variables and vectors. We first improve the FMT presented in the literature, see \cite[Theorem 1.1]{chw17} and \cite[Theorem 3.14]{cl19}. Specifically, in Theorem \ref{equilent cond}, we removed some redundant contraction conditions in the equivalent conditions of FMT. One always calculates contraction conditions to verify whether FMT is valid. Therefore, this improvement has practical significance in the application of FMT to derive the asymptotic normality for a sequence of complex Wiener-It\^o integrals. Moreover, in Theorem \ref{cFMBE bound}, we obtain the Berry-Ess\'een lower and upper bounds for a complex Wiener-It\^o integral and a bivariate Gaussian random vector with the same covariance. The fourth moment type Berry-Ess\'een upper bound we yielded simplifies that in \cite[Theorem 4.6]{Camp15}. Note that in some special cases, the Berry-Ess\'een bound we acquired is optimal, see Theorem \ref{example_BM} as an example. Compared to \cite[Theorem 4.1]{c23}, in which the author obtained the optimal bound under some smooth enough distance, we derive the bound under Wasserstein distance. 
	
	In Section \ref{section 3.2}, we focus on vector-valued complex Wiener-It\^o integrals. Here, we make some assumptions of circular symmetry. From the viewpoint of mathematics, circularly-symmetric complex random vectors have graceful properties and are convenient for calculations and analyses, see Section \ref{section 2.2} \cite[Section 3.7]{GRG13} and \cite[Section 3.6, 3.7]{ito2001} for more details. And from the viewpoint of applications, it turns out to be suited to model random fields arising in cosmology, see \cite{bmv07,Kamionkowski_1997, mp08,marinucci_peccati_2011}. In Theorem \ref{thm Camp}, we get the multivariate complex fourth moment theorem which shows that for a sequence of vector-valued circularly-symmetric complex Wiener-It\^o integrals, componentwise convergence to Gaussian always implies joint convergence. As a corollary, in Theorem \ref{thm Pecca TUd}, we establish some sufficient conditions for the asymptotic normality of a complex square integrable random variable of which chaos expansion satisfies some circularly symmetrical conditions. Finally, in Theorem \ref{duowei b-e bound thm}, the Berry-Ess\'{e}en upper bound for the multivariate complex fourth moment is acquired. Note that this Berry-Ess\'{e}en upper bound is related to a partially order relation of the indices of the components of a Wiener chaos random vector and may improve the corresponding estimate in the real case  (see \cite[Theorem 6.2.2]{NourPecc12}). See Remark \ref{remark3.12} for the further explanation.

	
	It is shown in \cite[Theorem 3.3]{cl17} that a complex $(p, q)$-th Wiener chaos random variable is equivalent to a two-dimensional random vector whose components are real $(p+q)$-th Wiener chaos random variables associated with a direct sum Hilbert space, which implies that both kernels of the components of real Wiener chaos random vector are $2^{p+q}$-dimensional vector-valued functions. Recently, we also find an explicit formula to represent these two $2^{p+q}$-dimensional functions, see \cite{ccl22}. That is to say, for any given complex $(p, q)$-th Wiener chaos random variable, the dimensions of kernels of the complex chaos variable and the equivalent two-dimensional real chaos vector are $1$ and $2^{p+q}$ respectively. This big growth of dimensions is a huge cost when we try to use this represent formula and hence we call it a dimension problem. 
	
	This dimension problem reminds us that there exist essential differences between the complex Wiener chaos random variables and the real Wiener chaos random vectors. Hence, it is more realistic to further develop the theories of complex Wiener chaos random variables other than to use the tool of real Wiener chaos random vectors with very high dimensional kernels. Moreover, we are more interested to seek the properties of complex Wiener chaos random variables that have no real counterparts. In fact, there really exist these types of properties.   
	
	In the  the following two examples, this type of property appears naturally. Recall firstly the complex Ornstein-Uhlenbeck process which models the Chandler wobble:
	\begin{equation}\label{cp}
		\dif Z_t=-\gamma Z_t\dif t+ \sqrt{a}\dif \zeta_t\,,\quad t\ge 0,\,\, Z_0=0,
	\end{equation}%
	where $Z_t=X_1(t)+\mi X_2(t)$ is a complex-valued process, $\gamma=\lambda-\mi \omega,\,\lambda>0$, $a>0$ and $\zeta_t
	$ is  a fractional Brownian motion with Hurst parameter $H\in [\frac12, \frac34)$. 
	For the special case $H=\frac12$, the maximum likelihood estimate for the parameter $\gamma$ was studied, please refer to \cite{arato1982linear, arato1962evaluation, LS01} for details.  
	For the case of $H\in (\frac12,\frac34)$, the authors in \cite{chw17} investigated essentially the asymptotic property of a ratio process as follows
	\begin{equation}\label{hat gamma}
		\frac{\frac{1}{\sqrt{T}}\int_0^T \bar{Z}_t\dif \zeta_t}{\frac{1}{T}\int_0^T \abs{Z_t}^2\dif t},
	\end{equation}
	where as usual, $\bar{Z}_t,\,\abs{Z_t}$ are respectively the complex conjugate and the modulus of $Z_t$. Formally, the above ratio process can be looked as the bias of the so-called least squares estimator $\hat{\gamma}$ of the parameter $\gamma$ although $\hat{\gamma}$ is not a real estimator from the viewpoint of statistics.  We can rewrite the numerator as a complex chaos process for $T\ge 0$:
	\begin{align}\label{FT dyi}
		F_T:= {I}_{1,1}(\psi_{T}(t,s)),
	\end{align}where 
	\begin{align}\label{buduichen kenel}
		\psi_{T}(t,s)=\frac{1}{\sqrt{T}} e^{-\bar{\gamma}(t-s)}\mathbf{1}_{\left\lbrace 0\leq s\leq t\leq T\right\rbrace},
	\end{align}
	and $\mathbf{1}_{E}$ is the indicator function of a set $E$. Moreover, we apply the product formula of complex multiple integrals and stochastic Fubini theorem to rewrite the denominator as follows.
	\begin{align*}
		\frac{1}{T} \int_0^T \abs{Z_t}^2\mathrm{d} t =  \frac{1}{2\lambda T}I_{1,1}(f_T(t,s))+ \frac{1}{T} \int_0^T\,  E\abs{Z_t}^2 \dif t
	\end{align*}where 
	\begin{align}\label{duichen kenel}
		f_T(t,s)=e^{-\bar{\gamma}(t-s)}\mathbf{1}_{\left\lbrace 0\leq s\leq t\leq T\right\rbrace}+e^{-{\gamma}(s-t)}\mathbf{1}_{\left\lbrace 0\leq t\leq s\leq T\right\rbrace} -e^{-\gamma (T-t)-\bar{\gamma} (T-s)}\mathbf{1}_{\set{0\le s,t\le T}}.
	\end{align}
	Hence, \begin{align}
		\frac{1}{T} \int_0^T \abs{Z_t}^2\mathrm{d} t = \frac{1}{2\lambda }\Big[\frac{ F_T+\bar{F}_T}{\sqrt{T}}  - \frac{\abs{Z_T}^2-E\abs{Z_T}^2}{T}\Big] + \frac{1}{T} \int_0^T\,  E\abs{Z_t}^2 \dif t.
		\label{x square}
	\end{align}
	
	To compare the kernel in \eqref{buduichen kenel}  with that in \eqref{duichen kenel}, we find that the former is completely asymmetric, but the latter is Hermite symmetric. This type of asymmetry has no real counterparts. It is known that  one of the most effective ways of dealing with real second chaos random variables is two objects: the Hilbert-Schmidt operator and the sequence of auxiliary kernels, see \cite[Section 2.7.4]{NourPecc12}. But for asymmetric kernels of complex second Wiener chaos random variables such as \eqref{buduichen kenel}, the two objects can not be applied any more. This is a main reason why it is more difficult to deal with complex  second Wiener chaos random variables than the real one. 
	
	Secondly,  as an extension of the model \eqref{cp}, the complex Vasicek model defined as
	\begin{align}\label{complex Vasicek}
		\dif Z_t=\gamma (b-Z_t)\dif t+ \sqrt{a}\dif \zeta_t\,,\quad t\ge 0,\,\, Z_0=0,
	\end{align} where $\gamma$ is the same as in \eqref{cp} and $b$ is a complex number.
	The asymptotic properties of the estimators of the parameters $\gamma$ and $b$ are related to two much more complicated ratio processes than \eqref{hat gamma}, where $3$-th complex Wiener chaos random variables with asymmetric kernels are involved, please refer to \cite{sty22} for details. From the above two examples, the theory of complex Wiener chaos random variables
	is necessary to be further developed and completed.

	The paper is organized as follows. Section \ref{section 2} introduces some elements of complex multiple Wiener-It\^o integrals and circularly-symmetric complex random vectors. In Section \ref{section 3.1} and Section \ref{section 3.2}, we obtain the simplified complex FMT and Berry-Ess\'een bound for a $d$-dimensional random vector of which components are complex Wiener-It\^o ingegrals, where $d\geq1$. For As an application, in section \ref{section 3.3}, we derive the optimal Berry-Ess\'een bound and Berry-Ess\'een upper bound for a statistic associated with complex Ornstein-Uhlenbeck process and complex fractional Ornstein-Uhlenbeck process, respectively. In Section \ref{section 4}, some technical estimates are shown and then the main results in Section \ref{section 3} are proved.
	
	\section{Preliminaries}\label{section 2}
	\subsection{Complex multiple Wiener-It\^{o} integral}\label{section 2.1}
	Let us now fix the mathematical framework of complex Wiener chaos random variables. See \cite{cl17, HD80, janson, ito} for more details.
	Suppose that $\FH$ is a complex separable Hilbert space. Recall that a complex Gaussian isonormal process $\set{Z(h):\,h\in \FH}$ is  a centered symmetric complex Gaussian family in $L^2(\Omega)$ such that
	\begin{align*}
		\mathbb{E}[ Z({h})^2]=0,\quad \mathbb{E}[Z({g})\overline{Z({h})}]=\innp{{g}, {h}}_{\FH},\quad \forall {g},{h}\in \FH.
	\end{align*}  
	For each $p,q\ge 0$, we write $\mathscr{H}_{p,q}(Z)$ to indicate the closed linear subspace of $L^2(\Omega)$ generated by the random variables of the type
	$$\set{H_{p,q}(Z( {h})):\, {h}\in \mathfrak{H},\,\norm{ {h}}_{\mathfrak{H} }=\sqrt2}$$ where $H_{p,q}(z)$ is the complex Hermite polynomial, or Hermite-Laguerre-It\^o polynomial, given by 
	\begin{equation*}
		\exp\left\{ \lambda\bar{z}+\bar{\lambda}z-2|\lambda|^2\right\} =\sum_{p=0}^{\infty}\sum_{q=0}^{\infty}\frac{\bar{\lambda}^p\lambda^q}{p!q!}H_{p,q}(z),\;\lambda\in\mathbb{C}.
	\end{equation*}
	The space $\mathscr{H}_{p,q}(Z)$ is called the Wiener-It\^{o} chaos of degree of $(p,q)$ of $Z$ (or say: $(m,n)$-th Wiener-It\^{o} chaos of $Z$).
	Take a complete orthonormal system $\set{ {e}_j,\,j\ge 1}$ in $\mathfrak{H} $. For two sequences $$\mathbf{p}=\set{p_j}_{j=1}^{\infty},\,\mathbf{q}=\set{q_j}_{j=1}^{\infty}$$ of non-negative integers with only finitely many nonzero components, define a Fourier-Hermite polynomial
	\begin{equation*}\label{fourier}
		\mathbf{J}_{\mathbf{p},\mathbf{q}}:=\prod_{j} \frac{1}{\sqrt{2^{p_j+q_j}p_j!q_j!}}H_{p_j,q_j}(\sqrt2 Z({e}_j)).
	\end{equation*}
	Then, for any $p,q\ge0$, the random variables
	\begin{equation*}\label{eq4}
		\set{\mathbf{J}_{\mathbf{p},\mathbf{q}}:\abs{\mathbf{p}} =p,\,\abs{\mathbf{q}}=q }
	\end{equation*}
	form a complete orthonormal system in $\mathscr{H}_{p,q}(Z)$. The linear mapping
	\begin{equation*}\label{mulint}
		{I}_{p,q}\big((\hat{\otimes}_{j=1}^{\infty} {e}_j^{\otimes p_j})\otimes (\hat{\otimes}_{j=1}^{\infty}\bar{ {e}}_j^{\otimes q_j})\big)=\sqrt{\mathbf{p}!\mathbf{q}!}\mathbf{J}_{\mathbf{p},\mathbf{q}}
	\end{equation*} 
	provides an isometry from the tensor product $\FH^{\odot p}\otimes \FH^{\odot q}$, equipped with the norm $\sqrt{p!q!}\norm{\cdot}_{\FH^{\otimes (p+q)}}$, onto the $(p,q)$-th Wiener-It\^{o} chaos $\mathscr{H}_{p,q}(Z)$ (see \cite[Theorem 13.2]{ito}). Here, $e_i\hat{\otimes} e_j$ means that the symmetrizing tensor product of $e_i$ and $e_j$ and $\FH^{\odot q}$ is the $q$ times symmetric tensor  product of $\FH$. 
	For any $f\in \FH^{\odot p}\otimes \FH^{\odot q}$, we call $ {I}_{p,q}(f)$ the complex multiple Wiener-It\^{o} integral of $f$ with respect to $Z$.
	For any $f\in \mathfrak{H}^{\otimes (p+q)}$, 
	define $$I_{p,q}(f)=I_{p,q}(\tilde{f}),$$ where $\tilde{f}$ is the symmetrization of $f$ in the sense of \cite[Equation 5.1]{ito} by It\^{o}. In this article, $F={I}_{p,q}(f)$ with $f\in \FH^{\odot p}\otimes \FH^{\odot q}$ is also called a complex $(p,q)$-th Wiener chaos random variable.
	Complex Wiener-It\^o chaos decomposition of $L^2(\Omega)$ implies that $L^2(\Omega)$ can be decomposed into the infinite orthogonal sum of the spaces $\mathscr{H}_{p,q}(Z)$. That is, any random variable $F \in L^2(\Omega)$ admits a unique expansion of the form
	\begin{equation}\label{complex chaos decomposition}
		F=\sum_{p=0}^{\infty}\sum_{q=0}^{\infty} I_{p,q}\left(f_{p,q}\right),	
	\end{equation}
	where $f_{0,0}=\E[F]$, and $f_{p,q} \in \mathfrak{H}^{\odot p}\otimes\mathfrak{H}^{\odot q}$ with $p+q \geq1$, are uniquely determined by $F$.
	
	Given $f\in\mathfrak{H}^{\odot a}\otimes\mathfrak{H}^{\odot b}$, $g\in\mathfrak{H}^{\odot c}\otimes\mathfrak{H}^{\odot d}$, for $i=0,\dots,a\land d$, $j=0,\dots,b\land c$, the $(i,j)$-th contraction of $f$ and $g$ is an element of $\mathfrak{H}^{\odot (a+c-i-j)}\otimes\mathfrak{H}^{\odot (b+d-i-j)}$ defined by
	\begin{align*}
		f \otimes_{i, j} g
		&= \sum_{l_{1}, \ldots, l_{i+j}=1}^{\infty}\left\langle f, e_{l_{1}} \otimes \cdots \otimes e_{l_{i}} \otimes \bar{e}_{l_{i+1}} \otimes \cdots \otimes \bar{e}_{l_{i+j}}\right\rangle\\ &\qquad\qquad\qquad\otimes\left\langle g, e_{l_{i+1}} \otimes \cdots \otimes e_{l_{i+j}} \otimes \bar{e}_{l_{1}} \otimes \cdots \otimes \bar{e}_{l_{i}}\right\rangle,
	\end{align*}
	and by convention, $f \otimes_{0,0} g=f \otimes g$ denotes the tensor product of $f$ and $g$. \cite[Theorem 2.1]{ch17} and  \cite[Theorem A.1]{Hoshino2017} establish the product formula for complex Wiener-It\^o integrals. For $f \in \mathfrak{H}^{\odot a} \otimes\mathfrak{H}^{\odot b}$ and $ g \in \mathfrak{H}^{\odot c} \otimes \mathfrak{H}^{\odot d}$ with $a, b, c, d \geq0$,
	\begin{equation}\label{Product_formula}
		I_{a, b}(f) I_{c, d}(g)=\sum_{i=0}^{a \wedge d} \sum_{j=0}^{b \wedge c}\binom{a}{i} \binom{d}{i}\binom{b}{j}\binom{c}{j} i! j! I_{a+c-i-j, b+d-i-j}\left(f \otimes_{i, j} g\right).
	\end{equation}
	
	Suppose that  $  f\in   \FH^{\odot p}\otimes \FH^{\odot q}$ and denote $\bar{f}$ the complex conjugate of $  f $. Then we call a function  $h\in \FH^{\odot q}\otimes \FH^{\odot p}$, given as follows,
	$$  h(t_1,\dots,t_q,s_1,\dots,s_p):=\bar{f}(s_1,\dots,s_p,t_1,\dots,t_q),$$ the reverse complex conjugate of $  f $.
	It is well known that 
	\begin{align*}
		\overline{{I}_{p,q}(f)}={I}_{q,p}(h),
	\end{align*}where $\overline{{I}_{p,q}(f)}$ is  the complex conjugate of ${I}_{p,q}(f)$.
	
	It\^o in \cite[Theorem 7]{ito} showed that complex Wiener -It\^o integrals satisfy the isometry property. That is, for $f\in\mathfrak{H}^{\odot a}\otimes\mathfrak{H}^{\odot b}$ and $g\in\mathfrak{H}^{\odot c}\otimes\mathfrak{H}^{\odot d}$,
	\begin{equation}\label{isometry property}
		\E\left[ I_{a,b}(f)\overline{{I}_{c,d}(g)}\right]=\mathbf{1}_{\left\lbrace a=c \right\rbrace } \mathbf{1}_{\left\lbrace b=d \right\rbrace } a!b!\left\langle f,g\right\rangle _{\FH^{\otimes (a+b)}}.
	\end{equation}

	Denote by $\FH_{\R}$ the real Hilbert space such that $\FH=\FH_{\R}+\mi \FH_{\R}$ and a realization of the isonormal Gaussian process $W$ over the Hilbert space direct sum space $\FH_{\R}\oplus \FH_{\R}$ is
	\begin{align*}              W(h,f)&=X(h)+Y(f),\qquad\forall h,f\in\FH_{\R}.         \end{align*}
	Suppose $\varphi\in \FH^{\odot p}\otimes \FH^{\odot q}$ and $F={I}_{p,q}(\varphi)$. Then 
	there exist $u,\,v\in (\FH_{\R}\oplus \FH_{\R})^{\odot (p+q)}$ such that
	\begin{align}
		F=\mathcal{I}_{p+q}(u)+\mi\, \mathcal{I}_{p+q}(v),\label{pp2 Connection}
	\end{align}
	where $\mathcal{I}_p(g)$ is the $p$-th real Wiener-It\^{o} multiple integral of $g$ with respect to $W$ (see \cite[Theorem 3.3]{cl17}). Please refer to \cite{ccl22} for the explicit expressions of the kernels $u$ and $v$. Hence, in this article, we can also call $F$ a $(p+q)$-th complex Wiener chaos without ambiguity.
	
	\subsection{Circularly-symmetric complex random vector}\label{section 2.2}
	In this section, we introduce the definition and some properties of circularly-symmetric complex random vectors, see \cite[Section 3.7]{GRG13} for more details. Throughout the paper, the distribution of a $d$-dimensional complex-valued random vector $Z$ means the 
	joint distribution of 
	$2d$-dimensional real-valued random vector composed by $2d$ real and imaginary components of $Z$.
	
	We say that a complex-valued random variable $Z$ is circularly symmetrical if the distribution of $Z$
	remains invariant under the transformation $z\to e^{\mi \alpha} z$ for any $\alpha\in [0,2\pi)$. It is clear that the circular symmetry of $Z$ implies that the pseudocovariance of $Z$ vanishes, i.e., $\E[Z^2]=0$. As usual, we denote by $ \mathcal{N}_d(0,\Sigma)$ a $d$-dimensional real-valued Gaussian random vector with covariance matrix $\Sigma$. We omit the subscript if $d=1$. We say that a complex-valued random variable $Z$ is Gaussian if the real and imaginary parts of $Z$ are jointly Gaussian. If $Z$ has a circularly-symmetric univariate complex Gaussian distribution with mean zero and variance $\sigma^2=\E[\abs{Z}^2]$, 
	we denote by $$Z\sim \mathcal{CN}(0,\sigma^2),$$ where the $\mathcal{C}$ denotes that $Z$ is both circularly-symmetric and complex. Note that the real and imaginary parts of $Z$ are then real independent identically distributed Gaussian random variables with variance $\frac{\sigma^2}{2}$ each. 
	
	Next, we introduce circularly-symmetric complex random vector. The definition of circularly-symmetric complex random variables can be extended to $d$-dimensional complex random vectors as follows.   
	Let $Z=(Z_1,Z_2,\dots, Z_d)'$ be a $d$-dimensional complex-valued random vector, then we say that $Z$ is circularly symmetrical if the distribution of $Z$
	remains invariant under the transformation $z\to e^{\mi \alpha} z$ for any $\alpha\in [0,2\pi)$, i.e., \begin{align*}
		Z\stackrel{ {d}}{=} e^{\mi \alpha} Z
	\end{align*} holds for any $\alpha\in [0,2\pi)$,
	where $\stackrel{ {d}}{=}$ means that two random variables have the same distribution. The circular symmetry of the random vector $Z$ also implies that the pseudocovariance of $Z$ vanishes, i.e., $\E[Z {Z}']=0$.
	We say that $Z$ is Gaussian if the $2d$ real and imaginary components of $Z$ are jointly Gaussian.
	We denote by $$Z\sim \mathcal{CN}_d(0,\Sigma)$$ if $Z$ has a circularly symmetrical multivariate complex Gaussian distribution with mean zero and covariance $\Sigma=\E[Z\bar{Z}']$.

	\section{Main results}\label{section 3}
	\subsection{Univariate complex FMT and Berry-Ess\'{e}en bound}\label{section 3.1}
	
	The following version of complex FMT improves the previous results in \cite[Theorem 1.3]{chw17} and \cite[Theorem 3.14]{cl19}, i.e., the redundant contraction conditions are removed. 
	\begin {theorem}[FMT]\label{equilent cond}
	Suppose $p$ and $q$ are fixed natural number such that $p+q\ge 2$.  Let   $\set{F_{n}=I_{p,q}(f_n)}_{n\geq1}$ be a sequence of $(p,q)$-th complex Wiener chaos random variable. If $\E[\abs{F_n}^2]\to \sigma^2$ and $\E[F_n^2]\to c+\mi b $ as $n\to \infty$, then the following statements  are equivalent:
	\begin{itemize}
		\item[\textup{(i)}]The sequence $(\RE F_n,\,\IM  F_n)$ converges in law to a bivariate normal distribution with covariance matrix $\tensor{C}= \frac{1}{2}\begin{bmatrix}\sigma^2+c & b\\ b & \sigma^2-c  \end{bmatrix}$ as $n\rightarrow \infty$.
		\item[\textup{(ii)}]  $\norm{f_n{\otimes}_{i,j} h_n}_{\FH^{\otimes ( 2(l-i-j))}}\to 0 $ as $n\rightarrow \infty$ for any $0<i+j\le p+q-1$, where $h_n$ is the kernel of $\bar{F}_n$ (the complex conjugate of $F_n$), i.e., $\bar{F}_n=I_{q,p}(h_n)$.
	\end{itemize}
	\end {theorem}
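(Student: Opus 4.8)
\emph{Proof plan.} The plan is to prove the two implications separately and, in each case, to reduce the statement to a fourth moment theorem already available in the literature: the complex one of \cite[Theorem 1.3]{chw17}, \cite[Theorem 3.14]{cl19} (together with the Berry--Ess\'een bound of Theorem \ref{cFMBE bound}) for (ii)$\Rightarrow$(i), and the real multidimensional one (see \cite[Chapter 6]{NourPecc12}), accessed through the identification \eqref{pp2 Connection}, for (i)$\Rightarrow$(ii). First I would record the elementary fact that, since $\E[(\RE F_n)^2]=\tfrac12(\E|F_n|^2+\RE\E[F_n^2])$, $\E[(\IM F_n)^2]=\tfrac12(\E|F_n|^2-\RE\E[F_n^2])$ and $\E[\RE F_n\cdot\IM F_n]=\tfrac12\IM\E[F_n^2]$, the two moment hypotheses are equivalent to convergence of the covariance matrix of $(\RE F_n,\IM F_n)$ to $\tensor C$; so in both directions the only substantive point is asymptotic Gaussianity, never the limiting covariance.

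The decisive new ingredient I would establish is a Cauchy--Schwarz bound
\[
\|f_n\otimes_{i,j}f_n\|^{2}\ \le\ \|f_n\otimes_{a,b}h_n\|\,\|f_n\otimes_{a',b'}h_n\|
\]
for suitable admissible pairs $(a,b),(a',b')$ with $0<a+b,a'+b'<p+q$, obtained by unfolding the definition of the contraction $\otimes_{i,j}$ and using that $h_n$ is the reverse complex conjugate of $f_n$ (for $p=q=1$ this comes from the identity $\|f_n\otimes_{1,0}f_n\|^{2}=\langle f_n\otimes_{0,1}h_n,\overline{f_n\otimes_{1,0}h_n}\rangle$), together with its analogue for $\|h_n\otimes_{i,j}h_n\|$. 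This is precisely what makes the contraction conditions involving $f_n\otimes_{i,j}f_n$ and $h_n\otimes_{i,j}h_n$ in the classical statements redundant.

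For (ii)$\Rightarrow$(i) I would then argue: by the above bound, (ii) implies $\|f_n\otimes_{i,j}f_n\|\to0$ and $\|h_n\otimes_{i,j}h_n\|\to0$ for all admissible indices, i.e.\ the full hypothesis set of \cite[Theorem 1.3]{chw17}, \cite[Theorem 3.14]{cl19}, which with the moment hypotheses yields (i); alternatively, and with an explicit rate, (ii) together with the moment hypotheses feeds directly into the Berry--Ess\'een estimate of Theorem \ref{cFMBE bound}, whose right-hand side contains only the norms $\|f_n\otimes_{i,j}h_n\|$ and the two moment discrepancies, and one concludes convergence in Wasserstein distance, hence in law. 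For (i)$\Rightarrow$(ii) I would use \eqref{pp2 Connection} to write $\RE F_n=\mathcal I_{p+q}(u_n)$, $\IM F_n=\mathcal I_{p+q}(v_n)$ with the explicit symmetric real kernels $u_n,v_n$ of \cite{ccl22}; since the covariances converge, the Peccati--Tudor theorem makes (i) equivalent to the componentwise conditions $\|u_n\widetilde\otimes_r u_n\|\to0$, $\|v_n\widetilde\otimes_r v_n\|\to0$ for $1\le r\le p+q-1$; substituting the formulas for $u_n,v_n$ and expanding these real contractions, one sees that every $f_n\otimes_{i,j}h_n$ with $0<i+j\le p+q-1$ occurs (up to index relabelling) as a summand of some $u_n\otimes_r u_n$ or $v_n\otimes_r v_n$, hence is bounded by a finite linear combination of their norms, giving (ii).

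The hard part will be the combinatorial bookkeeping underlying these two translation steps: pinning down, for each admissible $(i,j)$, the companion indices $(a,b),(a',b')$ in the Cauchy--Schwarz bound, and the precise expansion of the real contractions $u_n\otimes_r u_n$ in terms of complex contractions, and --- crucially --- checking that neither manipulation ever produces an index pair outside the open range $0<\cdot<p+q$, so that no non-vanishing boundary term such as $\|f_n\|^{2}=\E[|F_n|^2]/(p!q!)$ (or $f_n\otimes_{p,p}f_n=\E[F_n^2]/(p!)^2$ when $p=q$) is ever generated. This is where the hypothesis $p+q\ge2$ and the asymmetry of the admissible ranges --- $f_n\otimes_{i,j}h_n$ allowing $i\le p$, $j\le q$, whereas $f_n\otimes_{i,j}f_n$ allows only $i,j\le p\wedge q$ --- are used.
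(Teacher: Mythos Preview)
Your Cauchy--Schwarz bound for $\|f_n\otimes_{i,j}f_n\|^2$ is exactly the paper's key tool (Lemma~\ref{norm inequality lemma}): with $(a,b)=(p-i,q-j)$ and $(a',b')=(p-j,q-i)$ one has
\[
\|f_n\otimes_{i,j}f_n\|^2\le \|f_n\otimes_{p-i,q-j}h_n\|\cdot\|f_n\otimes_{p-j,q-i}h_n\|,
\]
and since $0\le i,j\le p\wedge q$ with $0<i+j<2(p\wedge q)$, both companion pairs land in the open range $0<\cdot<p+q$. So your route for (ii)$\Rightarrow$(i) --- (ii) forces the extra $f_n\otimes_{i,j}f_n$ contractions of \cite{chw17,cl19} to vanish, whence (i) by those papers --- is precisely the paper's argument.

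Where you diverge is (i)$\Rightarrow$(ii). You propose to pass through the real kernels $u_n,v_n$ of \eqref{pp2 Connection}, invoke Peccati--Tudor, and then recover each $f_n\otimes_{i,j}h_n$ from the expansion of $u_n\otimes_r u_n$, $v_n\otimes_r v_n$. This is workable in principle but is far more than is needed, and the step ``$f_n\otimes_{i,j}h_n$ occurs as a summand, hence is bounded'' is not automatic without a positivity argument (a summand need not vanish just because the sum does). The paper avoids all of this: since the earlier FMT of \cite{chw17,cl19} already states that (i) is \emph{equivalent} to the larger condition (ii$'$) --- which is (ii) \emph{together with} the $f_n\otimes_{i,j}f_n$ conditions --- one has (i)$\Rightarrow$(ii$'$)$\Rightarrow$(ii) trivially. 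The entire proof is therefore two lines: cite (i)$\Leftrightarrow$(ii$'$), and use Lemma~\ref{norm inequality lemma} for (ii)$\Leftrightarrow$(ii$'$). Your alternative of invoking Theorem~\ref{cFMBE bound} for (ii)$\Rightarrow$(i) is logically fine (its proof is independent of Theorem~\ref{equilent cond}), but unnecessary once you have the Cauchy--Schwarz bound.
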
 
	
	\begin{remark}
		Inspired by \cite[Proposition 5]{mp08}, we establish Theorem \ref{equilent cond}. Note that the definition of complex chaos random variable in \cite{mp08} is different from that in our framework.
	\end{remark}
	\begin{remark} 
		It is clear that Theorem \ref{equilent cond} appreciably simplify the  previous results in \cite[Theorem 1.3]{chw17} and \cite[Theorem 3.14]{cl19} which states that (i) is equivalent to that 
		\begin{itemize}
			\item[\textup{(ii')}]  $\norm{f_n\otimes_{i,j} f_n}_{\FH^{\otimes ( 2(l-i-j))}}\to 0$ for any $0<i+j\le l'-1$, where $l'=2(p\wedge q)$, and $\norm{f_n{\otimes}_{i,j} h_n}_{\FH^{\otimes ( 2(l-i-j))}}\to 0 $ for any $0<i+j\le p+q-1$.
		\end{itemize}
		This improvement is significant and very useful when applying complex FMT to get the asymptotic normality of $\set{F_{n}=I_{p,q}(f_n)}_{n\geq1}$. For example, we now can delete almost all the proof of \cite[Lemma 3.6]{chw17} to obtain the asymptotic normality of the complex random variable $F_T$ given as in \eqref{FT dyi}.
	\end{remark}
	\begin{remark}
		According to \cite[Theorem 1.3]{chw17} or \cite[Theorem 3.14]{cl19}, we know that (i) and (ii) are also equivalent to that $$\E[\abs{F_n}^4]-2 \big(\E[\abs{F_n}^2]\big)^2- \big| \E[F_n^2] \big|^2\rightarrow 0,\quad n\rightarrow \infty.$$
		See \cite[Proposition 3.12]{cl19} or \eqref{revised version1}, \eqref{revised version2} for different expressions of the quantity $\E[\abs{F_n}^4]-2 \big(\E[\abs{F_n}^2]\big)^2- \big| \E[F_n^2] \big|^2$. Compared to \cite[Equation 5.2.5, Equation 5.2.6]{NourPecc12}, the expression of fourth cumulant of real Wiener-It\^o integral, \cite[Proposition 3.12]{cl19} is more complicated and associated with more contractions. Due to this reason, unlike Fourth Moment Theorem for real Wiener-It\^o integral (see \cite[Theorem 5.2.7]{NourPecc12}), it is not sure that whether (i) and (ii) are equivalent to that
		$$\norm{f_n\tilde{{\otimes}}_{i,j} h_n}_{\FH^{\otimes ( 2(l-i-j))}}\to 0, $$ for any $0<i+j\le p+q-1$ unless the estimate in Lemma \ref{norm inequality lemma} is still hold for $f_n\tilde{{\otimes}}_{i,j} f_n$. This is one of the reflections that the theory of complex Wiener-It\^o integral is more complicated than the real one. However, one can ignore this problem since one always calculates $\norm{f_n\otimes_{i,j} h_n}_{\FH^{\otimes ( 2(l-i-j))}}$  rather than more complicated quantity $\norm{f_n\tilde{{\otimes}}_{i,j} h_n}_{\FH^{\otimes ( 2(l-i-j))}}$ to yield the asymptotic normality of $\set{F_{n}=I_{p,q}(f_n)}_{n\geq1}$ by utilizing Theorem \ref{equilent cond}.  
	\end{remark}

	\begin{theorem}\label{cFMBE bound}
		Suppose that $F$ is a $(p,q)$-th complex Wiener chaos random variable with 
		$p+q\ge 2$ satisfying $\E[\abs{F}^2]= \sigma^2$ and $\E[F^2]= a+\mi b $, and that $N$ has a bivariate normal distribution with mean zero and covariance matrix $$\tensor{C}= \frac{1}{2}\begin{bmatrix}\sigma^2+a & b\\ b & \sigma^2-a  \end{bmatrix}.$$ Then the Wasserstein distance between the laws of $F$ and $N$ satisfies the following fourth moment Berry-Ess\'{e}en bound:
		\begin{align}
			d_{W}(F, N)&\leq 4\sqrt2  \sqrt{ \sum_{r=1}^{l-1} {2r \choose r}}  \frac{ \sqrt{\lambda_1}}{\lambda_2} \sqrt{\E[\abs{F}^4]-2 \big(\E[\abs{F}^2]\big)^2- \big| \E[F^2] \big|^2}\label{Fourth moment BEB }\\
			&\leq c_2(p,q,a,b,\sigma) \sqrt{ \sum_{0<i+j<l}\norm{f {\otimes}_{i,j}h}^2_{\FH^{\otimes(2(l-i-j))}}},\label{bound 2}
		\end{align}
		and 
		\begin{equation}\label{lower bound}
		\begin{aligned}
			d_{W}(F, N)&\geq c_1(p,q,a,b,\sigma)\max\left\lbrace \mathbf{1}_{\left\lbrace p=q \right\rbrace }\left| \sum_{i=0}^{p} i!(p-i)!(p!)^2\binom{p}{i}^4\left\langle f\tilde{\otimes}_{i,p-i}f,h \right\rangle _{\FH^{\otimes (2p)}}\right| , \right.  \\&\left.  \qquad\qquad\qquad\qquad\qquad\mathbf{1}_{\left\lbrace p=q \right\rbrace }\left| \sum_{i=0}^{p} i!(p-i)!(p!)^2\binom{p}{i}^4\left\langle f\tilde{\otimes}_{i,p-i}f,f \right\rangle _{\FH^{\otimes (2p)}}\right| ,\right.  \\&\left.\qquad\qquad\qquad\qquad\qquad\sum_{0<i+j<l}\norm{f {\otimes}_{i,j}h}^2_{\FH^{\otimes(2(l-i-j))}}\right\rbrace .
		\end{aligned}
	\end{equation}
		where $l=p+q$, $c_1(p,q,a,b,\sigma)$and $c_2(p,q,a,b,\sigma)$ are two constants depending on $p,q,a,b,\sigma$, $h$ is the reverse complex conjugate of $f$ and $$\lambda_1=\frac12[\sigma^2+\sqrt{a^2+b^2}],\,\lambda_2=\frac12[\sigma^2-\sqrt{a^2+b^2}]$$ are the two eigenvalues of the matrix $\tensor{C}$.
	\end{theorem}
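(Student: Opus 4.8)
The plan is to reduce everything to the two-dimensional real chaos vector $N_F := (\RE F, \IM F)' = (\mathcal{I}_{l}(u), \mathcal{I}_{l}(v))'$ afforded by \eqref{pp2 Connection}, and then apply the quantitative multivariate fourth moment / Malliavin--Stein machinery for real Wiener chaos to this pair. First I would record that, since $F$ has covariance data $\sigma^2, a, b$, the vector $N_F$ has exactly the covariance matrix $\tensor{C}$, so $N$ and $N_F$ have matching first and second moments; this is what makes a Stein-type comparison possible. For the upper bound, the standard estimate (cf. \cite[Theorem 6.2.2]{NourPecc12}) gives
\begin{equation*}
	d_W(N_F, N) \le \kappa \, \sqrt{ \sum_{k=1}^{2} \E\big[ \big( \tensor{C}_{kk} - \tfrac{1}{l}\langle D\cdot, D\cdot\rangle \big)^2 \big] + (\text{cross term}) },
\end{equation*}
where $\kappa$ depends on $\tensor{C}$ only through $\|\tensor{C}^{-1}\|_{op}$ and $\|\tensor{C}\|_{op}$, i.e.\ through $\lambda_1,\lambda_2$ — this is the origin of the factor $\sqrt{\lambda_1}/\lambda_2$. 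The inner radicand is a sum of variances of inner products of Malliavin derivatives, and by the real-chaos product formula each such variance is a finite linear combination of squared contraction norms $\|u \otimes_r u\|^2$, $\|u \otimes_r v\|^2$, $\|v\otimes_r v\|^2$ for $1 \le r \le l-1$; the combinatorial constant $\sum_{r=1}^{l-1}\binom{2r}{r}$ is exactly what bookkeeping of these contractions produces.

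The key step is then to translate those real contraction norms back into the complex contractions $f \otimes_{i,j} h$. I would invoke the computation behind \cite[Proposition 3.12]{cl19} (equivalently the expressions labelled \eqref{revised version1}, \eqref{revised version2} in the paper), which identifies the scalar quantity
\begin{equation*}
	\Delta(F) := \E[\abs{F}^4] - 2(\E[\abs{F}^2])^2 - \big|\E[F^2]\big|^2
\end{equation*}
as a nonnegative linear combination of $\|f\otimes_{i,j} h\|^2_{\FH^{\otimes 2(l-i-j)}}$ over $0 < i+j < l$, and conversely controls the real contraction norms of $u,v$ by $\Delta(F)$. Chaining these: $d_W(F,N) = d_W(N_F, N) \le \kappa'\, \sqrt{\Delta(F)}$, which is \eqref{Fourth moment BEB }, and then $\Delta(F) \le \text{const}\cdot \sum_{0<i+j<l}\|f\otimes_{i,j}h\|^2$ gives \eqref{bound 2}. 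The constants $\lambda_1,\lambda_2$ must be carried honestly through the Stein bound; the clean way is to diagonalize $\tensor{C}$, run the bound for the standardized vector $\tensor{C}^{-1/2}N_F$, and note $d_W$ scales by at most $\|\tensor{C}^{1/2}\|_{op} = \sqrt{\lambda_1}$ while the inverse appears as $\lambda_2^{-1}$.

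For the lower bound \eqref{lower bound}, the idea is that $d_W$ dominates the distance in any smooth test functional, so I would test against low-degree polynomials. Testing against $\RE(F^2)$ and $\IM(F^2)$ type functionals (which is where the indicator $\mathbf{1}_{\{p=q\}}$ enters — only then is a term like $\E[F^3\bar F]$ or $\E[F^4]$ nonzero at the relevant chaos level) produces the two inner-product terms $\langle f\tilde\otimes_{i,p-i} f, h\rangle$ and $\langle f\tilde\otimes_{i,p-i} f, f\rangle$ after expanding via the complex product formula \eqref{Product_formula} and the isometry \eqref{isometry property}; these are exactly the third-moment-type obstructions to Gaussianity that survive in the complex case. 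Testing against a fourth-degree functional recovers $\sum_{0<i+j<l}\|f\otimes_{i,j}h\|^2$ as a lower bound, since for a genuine Gaussian $N$ all these quantities vanish while for $F$ they appear with a fixed sign in the fourth-moment expansion. The main obstacle I anticipate is bookkeeping: keeping the dependence of the constants on $(p,q,a,b,\sigma)$ explicit and correctly matched to $\lambda_1,\lambda_2$ through the change from the complex contraction norms to the real ones, and making sure the lower-bound test functionals are Lipschitz (or can be truncated to Lipschitz without losing the leading order) so that they are legitimate in $d_W$ — this is routine but error-prone, and is where I would spend most of the care.
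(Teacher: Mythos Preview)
Your plan is correct and follows the same overall architecture as the paper: pass to the real pair $(\RE F,\IM F)$, apply \cite[Theorem~6.2.2]{NourPecc12} for the upper bound, and obtain the lower bound by testing against low-degree functionals (the paper outsources this step to \cite[Theorem~4.1]{c23}, then reads off $|\E[F^3]|$, $|\E[F^2\bar F]|$ and $\Delta(F)$ via Lemma~\ref{3moment} and Proposition~\ref{be bound prop 63}).

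Where your execution of the upper bound diverges from the paper is the detour through the real contraction norms $\|u\otimes_r u\|$, $\|u\otimes_r v\|$, $\|v\otimes_r v\|$ and their translation back to the complex side. The paper avoids this entirely by using the fourth-cumulant form of \cite[Theorem~6.2.2]{NourPecc12}, which bounds $d_W$ by a constant times $|x_1|^{1/2}+|x_2|^{1/2}$ with $x_j$ the fourth cumulants of $A=\RE F$ and $B=\IM F$, and then the elementary moment identity
\[
x_1+x_2=\Delta(F)-2\bigl(\E[A^2B^2]-\E[A^2]\E[B^2]-2(\E[AB])^2\bigr),
\]
together with the nonnegativity of the bracketed cross term (\cite[Lemma~4.8]{cl17}), to get $x_1+x_2\le\Delta(F)$ directly. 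This is shorter and sidesteps the bookkeeping you flagged as error-prone; in particular no real-to-complex contraction dictionary is needed at all for \eqref{Fourth moment BEB }, and \eqref{bound 2} then follows from Proposition~\ref{be bound prop 63} alone. Your route would still work, but you are doing more than necessary. One small imprecision: in your lower-bound sketch, the inner products $\langle f\tilde\otimes_{i,p-i}f,h\rangle$ and $\langle f\tilde\otimes_{i,p-i}f,f\rangle$ arise from the \emph{third} moments $\E[F^3]$ and $\E[F^2\bar F]$ (hence from testing against cubic, not quadratic, functionals).
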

	
	\begin{remark}
		In Theorem \ref{cFMBE bound}, we get the Berry-Ess\'een lower  bound and upper bound by combining \cite[Theorem 6.2.2]{NourPecc12} (Berry-Ess\'een bound for vector-valued real Wiener-It\^o integral), \cite[Theorem 4.1]{c23} (optimal bound for complex Wiener chaos random variable under some smooth distance) and some estimates for the moments of $F$. Assume that the condition (ii) of Theorem \ref{equilent cond} is valid. Note that if $p\neq q$, then the lower bound is simplified to $$\sum_{0<i+j<l}\norm{f {\otimes}_{i,j}h}^2_{\FH^{\otimes(2(l-i-j))}},$$ which is strictly smaller than the upper bound \eqref{bound 2}. If $p = q$, then $$\sqrt{ \sum_{0<i+j<l}\norm{f {\otimes}_{i,j}h}^2_{\FH^{\otimes(2(l-i-j))}}}$$ will provide an optimal bound in the meaning of that the lower and upper bounds are consistent after neglecting some constant, if it holds that 
		\begin{align*}
			\left| \sum_{i=0}^{p} i!(p-i)!(p!)^2\binom{p}{i}^4\left\langle f\tilde{\otimes}_{i,p-i}f,h \right\rangle _{\FH^{\otimes (2p)}}\right|&\asymp \sqrt{ \sum_{0<i+j<l}\norm{f {\otimes}_{i,j}h}^2_{\FH^{\otimes(2(l-i-j))}}},\\
			\left| \sum_{i=0}^{p} i!(p-i)!(p!)^2\binom{p}{i}^4\left\langle f\tilde{\otimes}_{i,p-i}f,f \right\rangle _{\FH^{\otimes (2p)}}\right|&\asymp \sqrt{ \sum_{0<i+j<l}\norm{f {\otimes}_{i,j}h}^2_{\FH^{\otimes(2(l-i-j))}}}.
		\end{align*}
		Here, for two numerical sequences $\left\lbrace a_n:n\geq1 \right\rbrace $ and $\left\lbrace b_n:n\geq1 \right\rbrace $, we write $a_n\asymp b_n$ if there exist two constants $0<c_1<c_2<\infty$ not depending on $n$ such that $c_1b_n\leq a_n\leq c_2b_n$ for $n$ sufficiently large. 
		See Theorem \ref{example_BM} for an example of this case.
	\end{remark}
	
	\begin{remark}
		If the complex Wiener chaos $F $ is circularly symmetrical (which implies that $c+\mi b=0$) and  $N\sim \mathcal{CN}(0,\,\sigma^2)$, then the above Fourth Moment Berry-Ess\'{e}en bound degenerates to a simple form as
		\begin{align}
			d_{W}(F, N)\leq    \frac{8 }{\sigma} \sqrt{\sum_{r=1}^{l-1} {2r \choose r}}  \sqrt{\E[\abs{F}^4]-2 \big(\E[\abs{F}^2]\big)^2}\label{Fourth moment BEB2} 
		\end{align}
		The above fourth moment type Berry-Ess\'{e}en bounds \eqref{Fourth moment BEB2} appreciably simplify the following bound given in \cite[Theorem 4.6]{Camp15} and its proof:
		\begin{align*}
			d_{W}(F, N)\le \frac{\sqrt2 }{\sigma} \sqrt{ \E[\abs{F}^4]-2 \big(\E[\abs{F}^2]\big)^2 + \sqrt{\frac12\E[\abs{F}^4]\Big(\E[\abs{F}^4]-2 \big(\E[\abs{F}^2]\big)^2 } \Big)}.
		\end{align*} 
	\end{remark}
	
	\subsection{Multivariate complex FMT and Berry-Ess\'{e}en bound}\label{section 3.2}
	
	Corollary 4.7 and Proposition 4.9 of \cite{Camp15} imply a multivariate complex FMT, but the assumption of the following Theorem~\ref{thm Camp} is more simple and our proof is completely different to that of \cite{Camp15}. 
	\begin{theorem}[multivariate FMT] \label{thm Camp}
		For an integer $d\ge 1$, let $F_n=(F_n^1,F_n^2,\dots, F_n^d)'$ be a sequence of random
		vectors, and let $(p_1,q_1),\dots,(p_d,q_d) $ be positive integer pairs such that for each $j = 1, \dots, d$,
		$F_n^j = I_{p_j,q_j} ( f_n^j)$ for some kernel $f_n^j \in \FH^{\odot p_j}\otimes \FH^{\odot q_j} $. Moreover, assume that  $F_n$  is circularly-symmetric 
		and as $n\to \infty$,
		\begin{align} \label{Fj bar Fk}
			\E[F_n^j\overline{F_n^i}]&\to \E[Z_j\bar{Z}_i]:=\Sigma(j,i),\quad 1\le j,i \le d.
		\end{align}
		Then, as $n\to \infty$, the following two conditions are equivalent: 
		\begin{itemize}
			\item[\textup{(i)}]
			$(F_n)$ converges in law to $Z\sim \mathcal{CN}_d(0,\Sigma)$.
			\item[\textup{(ii)}] For every $1 \le j  \le d$, $(F^j_n)$ converges in law to $\mathcal{CN}(0,\Sigma(j,j))$.
		\end{itemize}
	\end{theorem}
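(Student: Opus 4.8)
The plan is to deduce the statement from the multivariate fourth moment theorem for real multiple Wiener--It\^o integrals (Peccati--Tudor; see \cite[Theorem 6.2.3]{NourPecc12}) after transferring everything to the real chaos via \eqref{pp2 Connection}; this is the reason our argument is structurally different from the complex Malliavin--Stein computation of \cite{Camp15}. The implication (i)$\Rightarrow$(ii) is immediate, since each one-dimensional marginal of a $\mathcal{CN}_d(0,\Sigma)$ vector is $\mathcal{CN}(0,\Sigma(j,j))$ and weak convergence survives the coordinate projections. For (ii)$\Rightarrow$(i), the first step is a reduction: by \cite[Theorem 3.3]{cl17} there exist, for each $j$, real kernels $u_n^j,v_n^j\in(\FH_{\R}\oplus\FH_{\R})^{\odot(p_j+q_j)}$ with $F_n^j=\mathcal{I}_{p_j+q_j}(u_n^j)+\mi\,\mathcal{I}_{p_j+q_j}(v_n^j)$, so that $\RE F_n^j=\mathcal{I}_{p_j+q_j}(u_n^j)$ and $\IM F_n^j=\mathcal{I}_{p_j+q_j}(v_n^j)$. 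Stacking these $2d$ real multiple integrals (of fixed, possibly repeated, orders) into a vector $G_n$, assertion (i) is equivalent to the joint convergence of $G_n$ to the Gaussian vector formed by the real and imaginary parts of $Z\sim\mathcal{CN}_d(0,\Sigma)$.

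\emph{Step 2 (convergence of the real covariance matrix).} Here the circular symmetry of the \emph{whole} vector $F_n$ enters: it forces $\E[F_n^jF_n^i]=0$ for all $i,j$, whence $\E[\RE F_n^j\,\RE F_n^i]=\E[\IM F_n^j\,\IM F_n^i]$ and $\E[\RE F_n^j\,\IM F_n^i]=-\E[\IM F_n^j\,\RE F_n^i]$; combining these with $\E[F_n^j\overline{F_n^i}]\to\Sigma(j,i)$ from \eqref{Fj bar Fk} and solving the resulting linear relations shows that every entry of $\mathrm{Cov}(G_n)$ converges, the limit being precisely the covariance matrix of the real/imaginary parts of $Z$.

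\emph{Step 3 (marginal convergence and conclusion).} Condition (ii) asserts that $F_n^j$ converges in law to $\mathcal{CN}(0,\Sigma(j,j))$, i.e. that $(\RE F_n^j,\IM F_n^j)$ converges to $\mathcal{N}_2(0,\tfrac12\Sigma(j,j)I_2)$; in particular each coordinate $\mathcal{I}_{p_j+q_j}(u_n^j)$, $\mathcal{I}_{p_j+q_j}(v_n^j)$ of $G_n$ converges in law to a centred one-dimensional Gaussian. Thus every component of $G_n$ converges marginally to a Gaussian and $\mathrm{Cov}(G_n)$ converges, so the Peccati--Tudor theorem yields $G_n\to\mathcal{N}_{2d}(0,\lim\mathrm{Cov}(G_n))$ jointly, which is exactly (i).

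\emph{Main obstacle.} The only genuinely delicate point is Step 2: one must use circular symmetry of $F_n$ \emph{as a vector}, not merely of each $F_n^j$, because \eqref{Fj bar Fk} controls only the Hermitian second moments $\E[F_n^j\overline{F_n^i}]$ and leaves the pseudo-second-moments $\E[F_n^jF_n^i]$ uncontrolled, yet the latter are exactly what is needed to pin down the full $2d\times2d$ real covariance matrix. Degenerate coordinates (for instance $F_n^j\equiv 0$ when $\Sigma(j,j)=0$) contribute a degenerate Gaussian component and cause no trouble, and the remaining verifications are routine.
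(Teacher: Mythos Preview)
Your proposal is correct and follows essentially the same route as the paper: decompose each $F_n^j$ into real $(p_j+q_j)$-th Wiener--It\^o integrals via \eqref{pp2 Connection}, use the circular symmetry of the full vector $F_n$ to force $\E[F_n^jF_n^i]=0$ and thereby pin down the limiting $2d\times 2d$ real covariance from \eqref{Fj bar Fk}, and then invoke the Peccati--Tudor theorem \cite[Theorem 6.2.3]{NourPecc12}. The paper argues the equivalence symmetrically (applying \cite[Theorem 6.2.3]{NourPecc12} in both directions), whereas you dispose of (i)$\Rightarrow$(ii) by projection and use Peccati--Tudor only for (ii)$\Rightarrow$(i); this is a cosmetic difference.
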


	The following complex chaotic central limit theorem (see \cite{Hu17, HuNua05, NourPecc12} for the real version) is a corollary of Theorem~\ref{thm Camp}.
	\begin{theorem}\label{thm Pecca TUd}
		Let $\set {F_n:\,n\ge 1}$ be a sequence in $L^2(\Omega)$ with zero means such that 
		$F_n$ has the following Wiener-It\^{o} chaos decomposition 
		$$F_n=\sum_{p+q\ge 1}I_{p,q}( f_{n,p,q}),$$ where $\left\lbrace I_{p,q}( f_{n,p,q}): p+q\geq 1\right\rbrace $ satisfies that for any $M\geq 1$, the random vector composed by $\left\lbrace I_{p,q}( f_{n,p,q}): 1\leq p+q\leq M \right\rbrace $ is circularly-symmetric. Assume the following conditions hold:
		\begin{itemize}
			\item[\textup{(i)}] For each $p+q\ge 1$,  $\lim\limits_{n\to \infty} p!q!\norm{f_{n,p,q}}^2_{\FH^{\otimes (p+q)}}= \sigma^2_{p,q}\ge 0$.
			\item[\textup{(ii)}] $\sigma^2=\sum\limits_{p+q\ge 1} \sigma^2_{p,q}<\infty $,
			\item[\textup{(iii)}] For each $p+q\ge 2$,  and $1\le i+j\le p+q-1,\,0\le i\le p,\,0\le j\le q$, $\lim\limits_{n\to \infty} \norm{f_{n,p,q} \otimes_{i,j} {h_{n,q,p}}}^2_{\FH^{\otimes (2(p+q-i-j)}}=0$, where 
			$  {h_{n,q,p}} $ is the reverse complex conjugate of function $f_{n,p,q}$.
			\item[\textup{(iv)}] $\lim\limits_{M\to \infty} \sup\limits_{n\ge 1}\sum\limits_{p+q> M}  p!q!\norm{f_{n,p,q}}^2_{\FH^{\otimes (p+q)}}=0$.
		\end{itemize}
		Then  $ F_n \stackrel{ {law}}{\to} \mathcal{CN}(0,\sigma^2)$ as $n\to \infty$.
	\end{theorem}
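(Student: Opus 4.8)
The plan is to derive Theorem~\ref{thm Pecca TUd} from the multivariate complex fourth moment theorem (Theorem~\ref{thm Camp}) combined with a truncation argument. For $M\ge 1$ set $F_n^{(M)}:=\sum_{1\le p+q\le M}I_{p,q}(f_{n,p,q})$ and let $d=d(M)$ be the number of pairs $(p,q)$ with $1\le p+q\le M$; by hypothesis the $d$-dimensional complex random vector $G_n^{(M)}:=\big(I_{p,q}(f_{n,p,q})\big)_{1\le p+q\le M}$ is circularly-symmetric. By the isometry property~\eqref{isometry property}, distinct chaos components are orthogonal in $L^2(\Omega)$, so $\E\big[I_{p,q}(f_{n,p,q})\overline{I_{p',q'}(f_{n,p',q'})}\big]=\mathbf{1}_{\{(p,q)=(p',q')\}}\,p!q!\|f_{n,p,q}\|^2_{\FH^{\otimes(p+q)}}$, which by assumption (i) converges, as $n\to\infty$, to the diagonal matrix $\Sigma^{(M)}$ with entries $\sigma^2_{p,q}$, $1\le p+q\le M$ (a valid covariance matrix, since each $\sigma^2_{p,q}\ge 0$). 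It then suffices to prove $F_n^{(M)}$ converges in law to $\mathcal{CN}(0,\sigma^2_{(M)})$ with $\sigma^2_{(M)}:=\sum_{1\le p+q\le M}\sigma^2_{p,q}$ for each fixed $M$, and afterwards let $M\to\infty$.

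First I would establish componentwise convergence of $G_n^{(M)}$. Fix $(p,q)$ with $1\le p+q\le M$. If $p+q=1$, the component is a first complex chaos, hence a circularly-symmetric complex Gaussian whose variance $p!q!\|f_{n,p,q}\|^2$ tends to $\sigma^2_{p,q}$ by (i), so it already converges in law to $\mathcal{CN}(0,\sigma^2_{p,q})$. If $p+q\ge 2$, circular symmetry of $G_n^{(M)}$ forces the pseudo-variance $\E[I_{p,q}(f_{n,p,q})^2]$ to vanish, while $\E[|I_{p,q}(f_{n,p,q})|^2]=p!q!\|f_{n,p,q}\|^2\to\sigma^2_{p,q}$ by (i); moreover assumption (iii) is exactly condition (ii) of Theorem~\ref{equilent cond} applied to $f_{n,p,q}$, whose complex conjugate has kernel $h_{n,q,p}$. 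Hence Theorem~\ref{equilent cond} yields that $I_{p,q}(f_{n,p,q})$ converges in law to $\mathcal{CN}(0,\sigma^2_{p,q})$.

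Next I would invoke Theorem~\ref{thm Camp} for $G_n^{(M)}$: it is circularly-symmetric, its covariances converge to $\Sigma^{(M)}$, and by the previous step each component converges to the corresponding one-dimensional complex Gaussian; therefore $G_n^{(M)}$ converges in law to $\mathcal{CN}_d(0,\Sigma^{(M)})$. Since $\Sigma^{(M)}$ is diagonal, the coordinates of this limiting complex Gaussian vector are independent, so by the continuous mapping theorem (summation is continuous) the sum $F_n^{(M)}$ converges in law to a sum of independent $\mathcal{CN}(0,\sigma^2_{p,q})$ variables, that is, to $\mathcal{CN}(0,\sigma^2_{(M)})$.

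Finally I would let $M\to\infty$. By the isometry property and assumption (iv), $\sup_{n\ge 1}\E\big[|F_n-F_n^{(M)}|^2\big]=\sup_{n\ge 1}\sum_{p+q>M}p!q!\|f_{n,p,q}\|^2\to 0$ as $M\to\infty$, while $\sigma^2_{(M)}\uparrow\sigma^2$ by (ii), so $\mathcal{CN}(0,\sigma^2_{(M)})$ converges weakly to $\mathcal{CN}(0,\sigma^2)$. A standard approximation lemma for weak convergence (a three-term estimate in, say, the bounded-Lipschitz metric, which is dominated by the $L^2$-distance: first choose $M$ to make the uniform approximation error and the Gaussian gap small, then let $n\to\infty$) then gives that $F_n$ converges in law to $\mathcal{CN}(0,\sigma^2)$. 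I do not expect a genuine obstacle, since the statement is essentially a corollary; the only points that require care are checking that circular symmetry of each finite chaos vector turns condition (iii) precisely into the contraction hypothesis of the univariate FMT while simultaneously killing the pseudo-variances, and observing that the \emph{diagonality} of $\Sigma^{(M)}$, hence independence of the limiting coordinates --- which genuinely uses the circular-symmetry framework rather than mere uncorrelatedness --- is what makes the sum have the announced law.
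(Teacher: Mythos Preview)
Your proposal is correct and follows essentially the same route as the paper: componentwise convergence via Theorem~\ref{equilent cond}, joint convergence of the truncated chaos vector via Theorem~\ref{thm Camp}, and then a three-term bounded-Lipschitz (Fortet--Mourier) estimate to pass from $F_n^{(M)}$ to $F_n$. Your treatment is in fact slightly more explicit than the paper's in two places---separating the trivial first-chaos case $p+q=1$ and spelling out why the diagonality of $\Sigma^{(M)}$ forces independence of the limiting coordinates---but the argument is the same.
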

	
	\begin{remark}
		The assumption of circular symmetry of $\left\lbrace I_{p,q}( f_{n,p,q}): 1\leq p+q\leq M \right\rbrace $ in Theorem \ref{thm Pecca TUd} is necessary to guarantee that we can use Theorem \ref{thm Camp} to get the conclusion. Note that for a $d$-dimensional complex-valued random vector $Z=(Z_1,Z_2,\dots, Z_d)'$, $Z_j$ is circularly symmetrical and $Z_j, Z_k$ are orthogonal (namely $\E\left[ Z_i\bar{Z_j}\right] =0$) for $1\leq j,k\leq d$ can not imply that $Z$ is circularly symmetrical, see \cite[Example 3.7.7]{GRG13} for an instance. This fact tells us that the circular symmetry and isometry property \eqref{isometry property} of $I_{p,q}( f_{n,p,q})$ for $p,q$ with $p+q\geq 1$ can not imply that the random vector composed by $\left\lbrace I_{p,q}( f_{n,p,q}): 1\leq p+q\leq M \right\rbrace $ is circularly-symmetric for any $M\geq 1$.
	\end{remark}
	\begin{remark}
		See \cite{Hu17} and \cite[Theorem 3]{ HuNua05} for the real version of Theorem \ref{thm Pecca TUd}. 
	\end{remark}
	
	A remarkable fact about the Berry-Ess\'{e}en upper bound for the multivariate complex fourth moment is related to a partially order relation of the indices of the components of a Wiener chaos random vector. Clearly, this fact has no real counterparts. 
	
	Suppose that $(p_j,q_j)\in \N^2,\,j=1,2$.  If $(p_1,\,q_1)\neq (p_2,\,q_2) $ and $p_1\ge p_2,\,q_1\ge q_2$, then we denote by $$(p_1,\,q_1)\curlyeqsucc (p_2,\,q_2),$$ or $$(p_2,\,q_2)\curlyeqprec (p_1,\,q_1).$$ If we remove the requirement $(p_1,\,q_1)\neq (p_2,\,q_2) $ in the above definition then `$\curlyeqsucc  $' gives a partially order relation in terms of set theory. It is obvious that 
	\begin{equation*}
		\mathbb{1}_{\set{ (p_2,q_2)\curlyeqsucc (p_1,q_1)}}=\mathbb{1}_{\set{p_1< p_2,q_1\le q_2}}+\mathbb{1}_{\set{p_1=p_2,q_1<q_2}} =\mathbb{1}_{\set{p_1\le p_2,q_1< q_2}}+\mathbb{1}_{\set{p_1<p_2,q_1=q_2}},
	\end{equation*}and 
	\begin{equation*}
		\mathbb{1}_{\set{ (p_1,q_1)\neq (p_2,q_2)}}=\mathbb{1}_{\set{(p_2,q_2)\curlyeqsucc (p_1,q_1)}} +\mathbb{1}_{\set{(p_1,q_1)\curlyeqsucc  (p_2,q_2)}} +\mathbb{1}_{\set{p_1>p_2,q_1<q_2}}+\mathbb{1}_{\set{p_1<p_2,q_1>q_2}}.
	\end{equation*}
	
	\begin{thm}\label{duowei b-e bound thm}
		For an integer $d\ge 1$, let $F=(F^1,\dots,F^d)'$ with each component $F^j = I_{p_j,q_j} ( f^j)$ where $f^j \in \FH^{\odot p_j}\otimes \FH^{\odot q_j} $ and $(p_j,q_j)$ is positive integer pair for $j = 1, \dots, d$. Assume that $F$ is circularly-symmetric and $\E[F\bar{F}']=\Sigma$. Assume that $Z\sim \mathcal{CN}(0,\Sigma)$. If, in addition, $\Sigma$ is invertible, then  the Wasserstein distance between the laws of $F$ and $Z$ satisfies the following fourth moment type Berry-Ess\'{e}en upper bound:
		\begin{align}\label{Fourth moment BEB2 coro 2}
			d_{W}(F, Z)\le { \frac{2\sqrt{d\lambda_{max}}}{\lambda_{min}}}\sqrt{\E\norm{F}^4-\E\norm{N}^4},
		\end{align}where $\lambda_{max},\,\lambda_{min}$ are respectively the maximum and minimum eigenvalue of $\Sigma$ and the norm $\norm{\cdot}$ is the Euclid norm of $\C^d$. Moreover, there exists a constant $c$ depending on $(p_k,q_k)$ such that 
	\begin{equation}\label{key estimate 000}
		\begin{aligned}
			\E\norm{F}^4-\E\norm{N}^4  
			&\le c \sum_{r=1}^{d}\Big\{ \sum_{0<i+i'<l_r}\norm{f_r {\otimes}_{i,i'}h_r}^2_{\FH^{\otimes(2(l_r-i-i'))}}  \\
			& \quad +   \sum_{j\neq r,\,1\le j\le d} \Big[\mathbb{1}_{\set{ (p_j,q_j)\curlyeqsucc (q_r,p_r)}} \norm{f_r}^2_{\FH^{\otimes l_r}}\cdot\norm{f_j\otimes_{p_j-q_r,q_j-p_r}h_j}_{\FH^{\otimes{2l_r} }} \\
			&\quad +\mathbb{1}_{\set{(p_j,q_j)\curlyeqsucc (p_r,q_r)}}\norm{f_r}^2_{\FH^{\otimes l_r}}\cdot\norm{f_j\otimes_{p_j-p_r,q_j-q_r}h_j}_{\FH^{\otimes{2l_r} }} \\
			&\quad +\mathbb{1}_{\set{(p_r,\,q_r)\curlyeqsucc (q_j,\,p_j)}}\norm{f_j }^2_{\FH^{\otimes l_j}}\cdot\norm{f_r\otimes_{p_r-q_j,\,q_r-p_j}h_r}_{\FH^{\otimes{2l_j} }}  \\
			&\quad +\mathbb{1}_{\set{(p_r,\,q_r)\curlyeqsucc (p_j,\,q_j)}}\norm{f_j }^2_{\FH^{\otimes l_j}}\cdot\norm{f_r\otimes_{p_r-p_j,\,q_r-q_j}h_r}_{\FH^{\otimes{2l_j} }}  \Big]\Big\}.
		\end{aligned}
	\end{equation}
		where $l_r=p_r+q_r$, $f_r$ and $h_r$ are respectively the kernels of $F^r$ and $\bar{F^r}$ for $1\leq r\leq d$.
	\end{thm}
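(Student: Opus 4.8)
\emph{Proof strategy.} The two displayed estimates are established independently. For \eqref{Fourth moment BEB2 coro 2} I would pass to the real realization of $F$ and invoke the quantitative multivariate fourth moment bound \cite[Theorem 6.2.2]{NourPecc12}. For \eqref{key estimate 000} I would expand $\E\norm{F}^4$ pair by pair using the complex product formula \eqref{Product_formula} and the isometry \eqref{isometry property}, treat the diagonal terms by the univariate complex fourth moment estimate, and show that the off-diagonal terms split exactly into the contributions recorded in \eqref{key estimate 000}. Throughout put $N:=Z\sim\mathcal{CN}_d(0,\Sigma)$ and $l_r:=p_r+q_r$.

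\emph{Step 1 (the abstract bound).} Using \eqref{pp2 Connection} componentwise, write $F^r=\mathcal I_{l_r}(u^r)+\mi\,\mathcal I_{l_r}(v^r)$ with $u^r,v^r\in(\FH_{\R}\oplus\FH_{\R})^{\odot l_r}$ and form the $2d$-dimensional real chaos vector $\widetilde F=(\mathcal I_{l_1}(u^1),\mathcal I_{l_1}(v^1),\dots,\mathcal I_{l_d}(u^d),\mathcal I_{l_d}(v^d))$ over $\FH_{\R}\oplus\FH_{\R}$, whose $r$-th block lies in the $l_r$-th chaos. Circular symmetry gives $\E[FF']=0$, so the covariance $C$ of $\widetilde F$ is the real representation of the Hermitian matrix $\Sigma$; hence $C$ is invertible (as $\Sigma$ is), with eigenvalues $\tfrac12\lambda_1,\dots,\tfrac12\lambda_d$, each of multiplicity two, so $\lambda_{\min}(C)=\tfrac12\lambda_{\min}$ and $\lambda_{\max}(C)=\tfrac12\lambda_{\max}$ (cf.\ \cite[Section 3.7]{GRG13}). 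Because $\C^d\cong\R^{2d}$ is isometric for the Euclidean norms, $1$-Lipschitz functions correspond, $d_W(F,N)=d_W(\widetilde F,\widetilde N)$ with $\widetilde N\sim\mathcal N_{2d}(0,C)$, and $\E\norm{\widetilde F}^4=\E\norm{F}^4$, $\E\norm{\widetilde N}^4=\E\norm{N}^4$. Applying \cite[Theorem 6.2.2]{NourPecc12} to $\widetilde F$ bounds $d_W(\widetilde F,\widetilde N)$ by a constant multiple of $\sqrt{2d}\,\lambda_{\max}(C)^{1/2}\lambda_{\min}(C)^{-1}\sqrt{\E\norm{\widetilde F}^4-\E\norm{\widetilde N}^4}$; inserting the eigenvalue relations turns $\sqrt{2d}\,(\tfrac12\lambda_{\max})^{1/2}(\tfrac12\lambda_{\min})^{-1}$ into $2\sqrt{d\lambda_{\max}}/\lambda_{\min}$, giving \eqref{Fourth moment BEB2 coro 2}.

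\emph{Step 2 (the explicit bound).} Since $\norm{F}^2=\sum_{r=1}^d\abs{F^r}^2$ and $\E[\abs{N^r}^2\abs{N^j}^2]=\Sigma(r,r)\Sigma(j,j)+\abs{\Sigma(r,j)}^2$, it suffices to bound $\Delta_{r,j}:=\E[\abs{F^r}^2\abs{F^j}^2]-\Sigma(r,r)\Sigma(j,j)-\abs{\Sigma(r,j)}^2$ for each ordered pair $(r,j)$. For $r=j$, circular symmetry forces $\E[(F^r)^2]=0$, so $\Delta_{r,r}=\E\abs{F^r}^4-2(\E\abs{F^r}^2)^2$, which by the univariate complex fourth moment estimate underlying \eqref{Fourth moment BEB }--\eqref{bound 2} (equivalently \cite[Proposition 3.12]{cl19}) is at most $c\sum_{0<i+i'<l_r}\norm{f_r\otimes_{i,i'}h_r}^2_{\FH^{\otimes(2(l_r-i-i'))}}$, the first group of terms of \eqref{key estimate 000}. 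For $r\neq j$, expand $\abs{F^r}^2=I_{p_r,q_r}(f_r)I_{q_r,p_r}(h_r)$ by \eqref{Product_formula} into $\sum_{i,i'}c^r_{i,i'}\,I_{l_r-i-i',\,l_r-i-i'}(f_r\otimes_{i,i'}h_r)$ and likewise $\abs{F^j}^2$; multiply, take expectations and use \eqref{isometry property}, so only summands with equal index survive: the two ``bottom'' summands give $\Sigma(r,r)\Sigma(j,j)$, and a ``top $\times$ top'' summand is possible only when $l_r=l_j$, in which case it is a multiple of $\abs{\E[F^rF^j]}^2$ if $(p_r,q_r)=(q_j,p_j)$ — hence $0$ by circular symmetry — or of $\abs{\Sigma(r,j)}^2$ if $(p_r,q_r)=(p_j,q_j)$, and otherwise routes through a non-trivial contraction. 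Bounding the remaining inner products (of a contraction of $f_r,h_r$ against one of $f_j,h_j$ at the same level) by Cauchy--Schwarz, any pairing of two ``small'' contractions ($0<i+i'<l_r$ and $0<m+m'<l_j$) is, via $ab\le\tfrac12(a^2+b^2)$, absorbed into the first group of terms for the indices $r$ and $j$; the only other survivors pair a ``top'' contraction $f_r\otimes h_r$ (of norm $\norm{f_r}^2_{\FH^{\otimes l_r}}$) with a contraction $f_j\otimes_{m,m'}h_j$, or $f_j\otimes h_j$ with $f_r\otimes_{i,i'}h_r$, and for these the level constraint $l_j-m-m'=l_r$ together with $0\le m\le p_j$, $0\le m'\le q_j$ and the matching of surviving holomorphic/antiholomorphic slots forces $(m,m')$ to equal either $(p_j-p_r,q_j-q_r)$ — admissible precisely when $(p_j,q_j)\curlyeqsucc(p_r,q_r)$ — or $(p_j-q_r,q_j-p_r)$ — admissible precisely when $(p_j,q_j)\curlyeqsucc(q_r,p_r)$ — and symmetrically with $r$ and $j$ swapped. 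This produces exactly the four indicator-weighted ``big $\times$ small'' terms of \eqref{key estimate 000}.

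\emph{Main obstacle.} Step 1 is routine. The work is entirely in Step 2 for $r\neq j$: one must show that every cross contribution to $\Delta_{r,j}$ is either a product of two small contraction norms, or carries the norm $\norm{f_r}^2_{\FH^{\otimes l_r}}$ (resp.\ $\norm{f_j}^2_{\FH^{\otimes l_j}}$) multiplied by exactly one of the two contraction norms dictated by the partial order $\curlyeqsucc$, or vanishes — the vanishing being forced by circular symmetry through $\E[F^rF^j]=0$. Making precise which complex contractions survive, and that the ``top $\times$ top'' term for incomparable indices really does reduce to small contractions, requires tracking, slot by slot, which indices are holomorphic and which antiholomorphic under $\otimes_{i,i'}$; this bookkeeping, which has no real-variable counterpart, is the source of the partial-order-dependent terms and is the genuinely delicate part of the argument.
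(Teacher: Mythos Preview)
Your overall strategy is the paper's: reduce to a $2d$-dimensional real chaos vector for the abstract bound, then control $\E\norm{F}^4-\E\norm{N}^4$ pair by pair via the complex product formula and Cauchy--Schwarz. Two points of divergence are worth flagging.

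In Step~1 there is a genuine gap in the citation. \cite[Theorem~6.2.2]{NourPecc12} does not deliver the Wasserstein bound in the form $\mathrm{const}\cdot\sqrt{\E\norm{\widetilde F}^4-\E\norm{\widetilde N}^4}$ with an absolute constant: it bounds $d_W$ by a quantity of the type $C(\{l_r\})\,\lambda_{\max}(C)^{1/2}\lambda_{\min}(C)^{-1}\sum_i \kappa_4(\widetilde F_i)^{1/2}$, where $C(\{l_r\})$ depends on the chaos orders (compare the factor $4\sqrt{\sum_{r=1}^{l-1}\binom{2r}{r}}$ in the proof of Theorem~\ref{cFMBE bound}). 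Your eigenvalue bookkeeping for $C$ versus $\Sigma$ is correct, but to obtain \eqref{Fourth moment BEB2 coro 2} with the exact constant $2\sqrt{d\lambda_{\max}}/\lambda_{\min}$ the paper invokes \cite[Theorem~4.3]{NR12}, which gives the $\sqrt{\E\norm{F}^4-\E\norm{N}^4}$ bound directly and without chaos-dependent prefactors.

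For Step~2 your direct expansion is equivalent to the paper's route, but the paper does not redo the slot-by-slot bookkeeping: it writes $\E\norm{F}^4-\E\norm{N}^4=\sum_{j,r}\big(\mathrm{Cov}(\abs{F^j}^2,\abs{F^r}^2)-\abs{\E[F^j\bar F^r]}^2\big)$, uses circular symmetry to insert $-\abs{\E[F^jF^r]}^2=0$, and then applies Proposition~\ref{jproo key estimate} to each pair. That proposition in turn rests on the ready-made identity \eqref{express 4moment} from \cite[Lemma~3.1]{ch17}, which already separates the contributions into the four blocks you describe; the ``small $\times$ small'' step is Lemma~\ref{norm inequality lemma}, and the boundary cases producing the $\curlyeqsucc$-indicators are exactly \eqref{second term}--\eqref{fourth term inequality}. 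So the combinatorics you anticipate as the main obstacle is handled by quoting \cite{ch17} rather than by re-expanding from the product formula.
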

	\begin{remark}\label{remark3.12}
		\begin{itemize}
			\item[\textup{(i)}] The extension to the general case of $\E[F\bar{F}']  \neq \Sigma$ can be deduced by a straightforward adaptation of our arguments.
			\item[\textup{(ii)}]  The estimate \eqref{key estimate 000} has a very special feature. For example, if $d=2$ and $(p_1,q_1)=(5,1),\,(p_2,q_2)=(3,2)$, then in the estimate \eqref{key estimate 000}, the second term $\sum_{j\neq k}$ vanishes. On the other hand, if we use the identity \eqref{pp2 Connection} together with Theorem 6.2.2  of \cite{NourPecc12}, the estimate of the real Wiener chaos,  some extra terms similar to that in $\sum_{j\neq k}$ appear.  That is to say, the estimate \eqref{key estimate 000}  may improve the corresponding estimate of \cite[Theorem 6.2.2]{NourPecc12}.
		\end{itemize}
	\end{remark}
	
	\subsection{Application}\label{section 3.3}
	In this section, we apply our results to get the optimal Berry-Ess\'een bound of $F_T$ defined as \eqref{FT dyi}. Without loss of generality, we assume $a=1$ in \eqref{cp}. For readers' convenience, we restate the complex Ornstein-Uhlenbeck process which models the Chandler wobble
	\begin{equation}\label{SDE}
		\mathrm{d}Z_t=-\gamma Z_t\mathrm{d}t+ \mathrm{d}\zeta_t, \quad t\geq0,
	\end{equation}
	where $Z_t$ is a complex-valued process, $Z_0=0$, $\gamma \in\mathbb{C}$ is unknown, $\lambda:=\mathrm{Re}\;\gamma > 0$, and $\zeta_t$ is a complex fractional Brownian motion with Hurst parameter $H\in [\frac12,\frac34)$. That is, $\zeta_t =\frac{ B_t^1+\mi B_t^2}{\sqrt2}$, where
	$(B_t^1, B_t^2)$ is a two-dimensional fractional Brownian motion with Hurst parameter $H\in [\frac12,\frac34)$. Suppose that only one trajectory $\left( Z_t, 0\leq t\leq T\right) $ can be observed. By minimizing $\int_{0}^{T}\left|\dot{Z}_{t}+\gamma Z_{t}\right|^{2} \mathrm{d} t$, one can obtain a least squares estimator of $\gamma$ as follows,
	\begin{equation*}
		\hat{\gamma}_{T}=-\frac{\int_{0}^{T} \bar{Z}_{t} \mathrm{d} Z_{t}}{\int_{0}^{T}\left|Z_{t}\right|^{2} \mathrm{d} t}=\gamma- \frac{\int_{0}^{T} \bar{Z}_{t} \mathrm{d} \zeta_{t}}{\int_{0}^{T}\left|Z_{t}\right|^{2} \mathrm{d} t}.
	\end{equation*}
	Here, in order to derive the optimal Berry-Ess\'een bound of $F_T$ (see Theorem \ref{example_BM}), we consider the special case $H=\frac12$, namely, the case that noise is a complex standard Brownian motion. 
	
	In \cite{chw17}, Chen, Hu and Wang proved that $\sqrt{T}\left(\hat{\gamma}_{T}-\gamma\right)$ is asymptotically normal. Namely, as $ T \rightarrow \infty$,
	\begin{equation*}
		\sqrt{T}\left[\hat{\gamma}_{T}-\gamma\right]=-\frac{\frac{1}{\sqrt{T}}\int_{0}^{T} \bar{Z}_{t} \mathrm{d} \zeta_{t}}{\frac{1}{T}\int_{0}^{T}\left|Z_{t}\right|^{2} \mathrm{d} t} \overset{d}{\rightarrow} \mathcal{CN}\left(0, 2\lambda\right),
	\end{equation*}
	where $\mathrm{Id}_2$ denotes $2\times2$ identity matrix.
	They showed that denominator satisfies
	\begin{equation*}
		\frac{1}{T}\int_{0}^{T}\left|Z_{t}\right|^{2} \mathrm{d} t\overset{a.s.}{\rightarrow}\frac{1}{2\lambda},
	\end{equation*} 
	and for numerator $F_T:=\frac{1}{\sqrt{T}}\int_{0}^{T} \bar{Z}_{t} \mathrm{d} \zeta_{t}$,
	\begin{equation}\label{example}
		F_T\overset{d}{\rightarrow}\mathcal{CN}\left(0, \frac{1}{2\lambda}\right).
	\end{equation} 
	Then the asymptotic normality of the estimator $\hat{\gamma}_{T}$ is obtained.

	Define the Hilbert space $\mathfrak{H}=L_{\mathbb{C}}^2\left( \left[ 0,+\infty\right) \right) $ with inner product $\left\langle  f,g\right\rangle_{\mathfrak{H}}= \int_{0}^{\infty}  f(t) \overline{g(t)} \mathrm{d} t$ for any $f,g\in\mathfrak{H}$. Given $f\in\mathfrak{H}^{\odot a}\otimes\mathfrak{H}^{\odot b}$, $g\in\mathfrak{H}^{\odot c}\otimes\mathfrak{H}^{\odot d}$, for $i=0,\dots,a\land d$, $j=0,\dots,b\land c$, the $(i,j)$-th contraction of $f$ and $g$ is the element of $\mathfrak{H}^{\odot (a+c-i-j)}\otimes\mathfrak{H}^{\odot (b+d-i-j)}$ defined by
	\begin{align*}
		&f \otimes_{i, j} g\left(t_{1}, \ldots, t_{a+c-i-j} ; s_{1}, \ldots, s_{b+d-i-j}\right) \\
		=&\,\int_{\mathbb{R}_{+}^{2 l}} f\left(t_{1}, \ldots, t_{a-i}, u_{1}, \ldots, u_{i} ; s_{1} \ldots, s_{b-j}, v_{1} ,\ldots, v_{j}\right) \\
		&\qquad \quad g\left(t_{a-i+1}, \ldots, t_{p-l}, v_{1}, \ldots, v_{j}; s_{b-j+1}, \ldots, s_{q-l}, u_{1}, \ldots, u_{i}\right) \mathrm{d} \vec{u} \mathrm{d} \vec{v} ,
	\end{align*}
	where $l=i+j, p=a+c, q=b+d, \vec{u}=\left(u_{1}, \ldots, u_{i}\right)$ and $\vec{v}=\left(v_{1}, \ldots, v_{j}\right)$.
	
	According to \eqref{SDE}, we know that
	\begin{equation}\label{F_T}
		F_T=\frac{1}{\sqrt{T}}\int_{0}^{T} \bar{Z}_{t} \mathrm{d} \zeta_{t}=\frac{1}{\sqrt{T}}\int_{0}^{T} \int_{0}^{T}e^{-\bar{\gamma}(t-s)}\mathbf{1}_{\left\lbrace 0\leq s\leq t\leq T\right\rbrace} \mathrm{d} \zeta_{t}\mathrm{d}\bar{\zeta}_s=I_{1,1}(\psi_{T}(t,s)),
	\end{equation}
	where
	\begin{equation*}
		\psi_{T}(t,s)=\frac{1}{\sqrt{T}} e^{-\bar{\gamma}(t-s)}\mathbf{1}_{\left\lbrace 0\leq s\leq t\leq T\right\rbrace}.
	\end{equation*}
	Then
	\begin{equation*}
		\bar{F_T}=I_{1,1}(h_{T}(t,s)),
	\end{equation*}
	where
	\begin{equation*}
		h_{T}(t,s)=\overline{\psi_{T}(s,t)}=\frac{1}{\sqrt{T}} e^{-\gamma(s-t)}\mathbf{1}_{\left\lbrace 0\leq t\leq s\leq T\right\rbrace}.
	\end{equation*}
	By isometry property of complex Wiener It\^o integral \eqref{isometry property}, we obtain that
	\begin{equation*}
		\begin{aligned}
			\E\left[ F_T^2\right] &=\E(I_{1,1}(\frac{1}{\sqrt{T}}\psi_{T})\overline{I_{1,1}(\frac{1}{\sqrt{T}}h_T)})=\frac{1}{T}\left\langle \psi_{T},h_T \right\rangle _{\mathfrak{H}^{\otimes 2}}\\
			&=\frac{1}{T}\int_{0}^{\infty}\int_{0}^{\infty} \psi_{T}\left(t,s \right)  \overline{h_{T}\left( t,s\right) }\mathrm{d}t\mathrm{d}s\\
			&=\frac{1}{T}\int_{0}^{\infty}\int_{0}^{\infty} e^{-\bar{\gamma}(t-s)}\mathbf{1}_{\left\lbrace 0\leq s\leq t\leq T\right\rbrace} e^{-\bar{\gamma}(s-t)}\mathbf{1}_{\left\lbrace 0\leq t\leq s\leq T\right\rbrace}\mathrm{d}t\mathrm{d}s=0,
		\end{aligned}
	\end{equation*}
	and
	\begin{equation*}
		\begin{aligned}
			\E\left[ |F_T|^2\right] & =\E(I_{1,1}(\frac{1}{\sqrt{T}}\psi_{T})\overline{I_{1,1}(\frac{1}{\sqrt{T}}\psi_{T})})=\frac{1}{T}\left\langle \psi_{T},\psi_{T} \right\rangle _{\mathfrak{H}^{\otimes 2}}\\
			&=\frac{1}{T}\int_{0}^{\infty}\int_{0}^{\infty} \psi_{T}\left(t,s \right)  \overline{\psi_{T}\left( t,s\right) }\mathrm{d}t\mathrm{d}s
			\\&=\frac{1}{T}\int_{0}^{\infty}\int_{0}^{\infty} e^{-\bar{\gamma}(t-s)}\mathbf{1}_{\left\lbrace 0\leq s\leq t\leq T\right\rbrace} e^{-\gamma(t-s)}\mathbf{1}_{\left\lbrace 0\leq s\leq t\leq T\right\rbrace}\mathrm{d}t\mathrm{d}s
			\\&=\frac{1}{T}\int_{0}^{T}\int_{0}^{t}e^{-2\lambda(t-s)}\mathrm{d}s\mathrm{d}t=\frac{1}{2\lambda}+\frac{1}{4\lambda^2T}e^{-2\lambda T}-\frac{1}{4\lambda^2T}
			\\&\rightarrow\frac{1}{2\lambda}\text{ as }T\rightarrow\infty.  
		\end{aligned}
	\end{equation*}
	Note that $\lim\limits_{T\rightarrow \infty}\left( 1+\frac{1}{2\lambda T}e^{-2\lambda T}-\frac{1}{2\lambda T}\right)=1 $, then $1+\frac{1}{2\lambda T}e^{-2\lambda T}-\frac{1}{2\lambda T}>0$ for $T$ large enough. Now we consider 
	\begin{equation}\label{normalized F_T}
		\tilde{F_T}=\left(1+\frac{1}{2\lambda T}e^{-2\lambda T}-\frac{1}{2\lambda T} \right) ^{-\frac{1}{2}}F_T.
	\end{equation}
	and derive the optimal Berry-Ess\'een bound of $\tilde{F_T}$ converging in law to a complex normal random variable $Z\sim\mathcal{CN}(0,\frac{1}{2\lambda})$ as $T\rightarrow\infty$. According to \cite[Lemma 4.6, Lemma 4.7]{c23} and Proposition \ref{be bound prop 63}, we have that 
	\begin{align}\label{example_3moment}
		\left| \left\langle f\tilde{\otimes}_{0,1}f,h \right\rangle _{\FH^{\otimes 2}}+\left\langle f\tilde{\otimes}_{1,0}f,h \right\rangle _{\FH^{\otimes 2}}\right|&=\left| \E\left[F_T^3 \right] \right|=0, \\
		\left|  \left\langle f\tilde{\otimes}_{0,1}f,f \right\rangle _{\FH^{\otimes 2}}+\left\langle f\tilde{\otimes}_{1,0}f,f \right\rangle _{\FH^{\otimes 2}}\right| &= \left|\E\left[F_T^2\bar{F_T} \right] \right|  \asymp \frac{1}{\sqrt{T}},
	\end{align}
	and 
	\begin{equation}\label{example_4moment}
		\E\left[\left| F_T \right| ^4 \right]-2\left(\E\left[ \left| F_T \right| ^2\right]  \right) ^2-\left| \E\left[ F_T^2\right] \right| ^2\asymp \norm{f {\otimes}_{0,1}h}^2_{\FH^{\otimes2}}+ \norm{f {\otimes}_{1,0}h}^2_{\FH^{\otimes2}}\asymp\frac{1}{T}.
	\end{equation}

	Combining Theorem \ref{cFMBE bound}, \eqref{example_3moment} and \eqref{example_4moment}, we obtain the following theorem.
	\begin{theorem}\label{example_BM}
		Suppose that $\tilde{F_T}$ is defined as \eqref{normalized F_T} and $Z\sim\mathcal{CN}(0,\frac{1}{2\lambda})$. Then 
		\begin{equation*}
			c_1(\lambda) 
			\frac{1}{\sqrt{T}}\leq d_{W}(\tilde{F_T}, Z)\leq c_2(\lambda)\frac{1}{\sqrt{T}},
		\end{equation*}
		where $c_1(\lambda) $ and $c_2(\lambda) $ are two constants only depending on $\lambda$.
	\end{theorem}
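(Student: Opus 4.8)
The proof of Theorem~\ref{example_BM} is now essentially a matter of assembling the pieces that have already been prepared. First I would verify that $\tilde F_T$ is a genuine normalized version of $F_T$: since $\E[|F_T|^2]=\frac{1}{2\lambda}(1+\frac{1}{2\lambda T}e^{-2\lambda T}-\frac{1}{2\lambda T})$ and $\E[F_T^2]=0$ as computed above, multiplying $F_T$ by $(1+\frac{1}{2\lambda T}e^{-2\lambda T}-\frac{1}{2\lambda T})^{-1/2}$ yields a $(1,1)$-th complex Wiener chaos random variable with $\E[|\tilde F_T|^2]=\frac{1}{2\lambda}=\sigma^2$ and $\E[\tilde F_T^2]=0$, so that $a=b=0$ and the target covariance matrix $\tensor C=\frac{1}{2}\begin{bmatrix}\sigma^2 & 0\\ 0 & \sigma^2\end{bmatrix}$ is exactly the covariance of $Z\sim\mathcal{CN}(0,\frac{1}{2\lambda})$, with eigenvalues $\lambda_1=\lambda_2=\frac{1}{4\lambda}$. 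Here $p=q=1$, $l=p+q=2$, so the sums over contractions run only over $(i,j)$ with $i+j=1$, i.e. over the two terms $f\otimes_{0,1}h$ and $f\otimes_{1,0}h$; and the relevant kernels rescale by the same bounded factor, which by \eqref{example_4moment} and \eqref{example_3moment} does not affect the order in $T$.

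Next I would apply Theorem~\ref{cFMBE bound} directly to $F=\tilde F_T$ and $N=Z$. The upper bound \eqref{Fourth moment BEB } gives
\begin{equation*}
  d_W(\tilde F_T,Z)\le 4\sqrt2\,\sqrt{\textstyle\sum_{r=1}^{1}\binom{2r}{r}}\;\frac{\sqrt{\lambda_1}}{\lambda_2}\sqrt{\E[|\tilde F_T|^4]-2(\E[|\tilde F_T|^2])^2-|\E[\tilde F_T^2]|^2},
\end{equation*}
and since $\lambda_1=\lambda_2=\frac{1}{4\lambda}$ the prefactor is a constant depending only on $\lambda$, while the fourth-moment quantity is $\asymp \frac1T$ by \eqref{example_4moment} (the normalization constant tends to $1$, so replacing $F_T$ by $\tilde F_T$ changes this quantity only by a factor $1+o(1)$). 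This yields $d_W(\tilde F_T,Z)\le c_2(\lambda)T^{-1/2}$.

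For the lower bound I would invoke the inequality \eqref{lower bound} of Theorem~\ref{cFMBE bound}. Since $p=q=1$, the indicator $\mathbb{1}_{\{p=q\}}$ is active, and the maximum on the right-hand side includes in particular the term
\begin{equation*}
  c_1(\lambda)\Big|\textstyle\sum_{i=0}^{1} i!(1-i)!(1!)^2\binom{1}{i}^4\langle f\tilde\otimes_{i,1-i}f,f\rangle_{\FH^{\otimes 2}}\Big|
  = c_1(\lambda)\big|\langle f\tilde\otimes_{0,1}f,f\rangle_{\FH^{\otimes 2}}+\langle f\tilde\otimes_{1,0}f,f\rangle_{\FH^{\otimes 2}}\big|,
\end{equation*}
which by \eqref{example_3moment} equals $c_1(\lambda)|\E[F_T^2\bar F_T]|\asymp T^{-1/2}$ (again up to the bounded normalization factor). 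Hence $d_W(\tilde F_T,Z)\ge c_1(\lambda)T^{-1/2}$, and combined with the upper bound this proves the two-sided estimate. The only genuinely substantive inputs are the asymptotics \eqref{example_3moment} and \eqref{example_4moment}, which are cited from \cite{c23} and Proposition~\ref{be bound prop 63}; the main point of the present proof is simply the observation that for $p=q$ the lower bound in Theorem~\ref{cFMBE bound} matches the upper bound in order, so no obstacle remains beyond bookkeeping the normalization constant and checking it is harmless.
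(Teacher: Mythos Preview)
Your proof is correct and follows essentially the same approach as the paper: the paper's argument is precisely to combine Theorem~\ref{cFMBE bound} with the asymptotics \eqref{example_3moment}--\eqref{example_4moment}, and you have simply spelled out the details (normalization, eigenvalues, which term in the lower-bound maximum is active) that the paper leaves implicit. The only cosmetic point is that the estimate $|\E[F_T^2\bar F_T]|\asymp T^{-1/2}$ is the displayed line following \eqref{example_3moment} rather than \eqref{example_3moment} itself, but the content you invoke is exactly what the paper establishes.
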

	\begin{rem}
		For $H\in (\frac12,\frac34)$, from \cite{cy23,chw17}, we know that
		\begin{equation*}
			(\RE F_T, \IM F_T)\overset{d}{\rightarrow}\mathcal{N}_2\left(0, \alpha_H^2 \tensor{C} \right),
		\end{equation*}
		where $\alpha_H=H(2H-1)$ and  $\tensor{C}=\frac{1}{2}\left[\begin{array}{cc}
			\sigma^{2}+a & b \\
			b & \sigma^{2}-a
		\end{array}\right]$ with
		\begin{equation*}
			\begin{aligned}
				\sigma^{2}&=\frac{2}{\Gamma(2-2 H)^{2}} \int_{[0, \infty)^{2}} \mathrm{d} x \mathrm{d} y \frac{(x y)^{1-2 H}}{(x+y)(x+\bar{\gamma})(y+\gamma)}\\&\quad
				+\frac{\Gamma^{2}(2 H-1)}{2 \lambda}\left(\frac{2}{|\gamma|^{4 H-2}}+\frac{1}{\gamma^{4 H-2}}+\frac{1}{\bar{\gamma}^{4 H-2}}\right)\\&= \frac{\Gamma(2H-1)}{2
					\lambda}\left( \frac{1}{\gamma^{4H-2}}+\frac{1}{\bar{\gamma}^{4H-2}}\right)  \left(  \Gamma(2H-1) +\frac{2\Gamma(3-4H)\Gamma(4H-2)}{\Gamma(2-2H)}\right) ,
				\\a+\mathrm{i} b&=\frac{2}{\Gamma(2-2 H)^{2}} \int_{[0, \infty)^{2}} \frac{(x y)^{1-2 H}}{(y+\gamma)^{2}}\left[\frac{1}{x+y}+\frac{1}{x+\gamma}\right] \mathrm{d} x \mathrm{d} y\\&= \frac{2\Gamma(2H-1)}{ 
					{\bar{\gamma}}^{4H-1}} \left(  \Gamma(2H) +\frac{\Gamma(3-4H)\Gamma(4H-1)}{\Gamma(2-2H)}\right)  .
			\end{aligned}
		\end{equation*}
		Using \cite[Lemma 3.1]{ckl20}, we can derive that 
		\begin{equation*}
			\sqrt{E\left[\left| F_T \right| ^4 \right]-2\left(E\left[ \left| F_T \right| ^2\right]  \right) ^2-\left| E\left[ F_T^2\right] \right| ^2 }\leq c(\lambda,H) \begin{cases}T^{-\frac{1}{2}}, & H \in\left[\frac{1}{2}, \frac{5}{8}\right), \\ T^{-\left( \frac{1}{2}-\right) }, & H=\frac{5}{8}, \\ T^{4 H-3}, & H \in\left(\frac{5}{8}, \frac{3}{4}\right),\end{cases}
		\end{equation*}
		where $c(\lambda,H)$ is a constant depending on $\lambda$ and $H$. Denote by $\tensor{C}_T$ the covariance matrix of $	(\RE F_T, \IM F_T)$. Note that the matrix $\tensor{C}$ is invertible, then for $T$ large enough, $\tensor{C}_T$ is invertible. Let $\tilde{\tilde{F_T}}=\tensor{C}^{\frac12}\tensor{C}_T^{-\frac12} F_T$ and $N$ be a bivariate normal random vector with mean zero and covariance matrix $\tensor{C}$, then we can get the Berry-Ess\'een upper bound of $\tilde{\tilde{F_T}}$ as 
		\begin{equation*}
			d_W\left( \tilde{\tilde{F_T}},N\right) \leq c(\lambda, H, a,b,\sigma)\begin{cases}T^{-\frac{1}{2}}, & H \in\left[\frac{1}{2}, \frac{5}{8}\right), \\ T^{-\left( \frac{1}{2}-\right) }, & H=\frac{5}{8}, \\ T^{4 H-3}, & H \in\left(\frac{5}{8}, \frac{3}{4}\right),\end{cases}
		\end{equation*}
		where $c(\lambda, H, a,b,\sigma)$ is a constant depending on $\lambda, H, a, b$ and $\sigma$. In order to obtain the Berry-Ess\'een lower bound provided as \eqref{lower bound} in Theorem \ref{cFMBE bound}, more precise calculations of the quantities $\left| \E\left[F_T^3 \right] \right|$, $\left|\E\left[F_T^2\bar{F_T} \right] \right|$ and $E\left[\left| F_T \right| ^4 \right]-2\left(E\left[ \left| F_T \right| ^2\right]  \right) ^2-\left| E\left[ F_T^2\right] \right| ^2$ are required. This improving topic will be investigated in other works.
	\end{rem}
	
	\section{Proofs of the main results}\label{section 4}
	
	\subsection{Some technical estimates}\label{section 4.1}
	
	\begin{lemma}\label{basic inequliayt}
		Suppose that $f_j\in \FH^{\odot p_j}\otimes \FH^{\odot q_j}$ with $j=1,2$. Denote $l_j=p_j+q_j$ and $h_j$ the reverse complex conjugate of $f_j$. Then for $0\leq i\leq p_1\wedge q_2$ and $0\leq j\leq q_1\wedge p_2$,
		\begin{align}
			\norm{f_1 {\otimes}_{i,j} f_2}_{\FH^{\otimes(l_1+l_2-2(i+j))}}&=\norm{f_2 {\otimes}_{j,i} f_1}_{\FH^{\otimes(l_1+l_2-2(i+j))}},\label{communicative law}\\
			\norm{f_1\tilde{\otimes}_{i,j} f_2}_{\FH^{\otimes(l_1+l_2-2(i+j))}}&\le \norm{f_1\otimes_{i,j} f_2}_{\FH^{\otimes(l_1+l_2-2(i+j))}}\le \norm{f_1}_{\FH^{\otimes l_1}}\cdot\norm{f_2}_{\FH^{\otimes l_2}},\label{contraction symmetry inequality}\\
			\norm{f_1\otimes_{i,j} f_2}^2_{\FH^{\otimes(l_1+l_2-2(i+j))}}&=\innp{f_1\otimes_{p_1-i,q_1-j} h_1,\,h_2\otimes_{q_2-i, p_2-j} f_2 }_{\FH^{\otimes(2(i+j))}}.\label{fubi identity} \end{align}
	\end{lemma}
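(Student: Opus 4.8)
The plan is to verify the three identities/inequalities by reducing everything to the coordinate representation of contractions with respect to the orthonormal basis $\set{e_j}$, then using elementary manipulations (relabeling summation indices, Cauchy--Schwarz, and the observation that symmetrization is an orthogonal projection). Throughout I would write $f_1,f_2$ in their expansions $f_1=\sum \langle f_1, e_{\cdot}^{\otimes p_1}\otimes \bar e_{\cdot}^{\otimes q_1}\rangle\, e_{\cdot}^{\otimes p_1}\otimes \bar e_{\cdot}^{\otimes q_1}$ etc., so that $f_1\otimes_{i,j}f_2$ and $f_2\otimes_{j,i}f_1$ become explicit sums over contracted indices.

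First, for \eqref{communicative law}: writing out $f_1\otimes_{i,j}f_2$ using the definition, the squared norm $\norm{f_1\otimes_{i,j}f_2}^2$ is a sum of products of four inner products of $f_1,\bar f_1,f_2,\bar f_2$ against tensor basis elements, with $i+j$ ``free'' indices coming from the first contraction and $i+j$ from the second being identified with the remaining uncontracted legs. Performing the same expansion for $\norm{f_2\otimes_{j,i}f_1}^2$ and relabeling the dummy summation indices (swapping the roles of the two blocks of contracted variables, and noting that the roles of $i$-type and $j$-type indices for $f$ versus $g$ are exchanged consistently with the definition of $\otimes_{i,j}$) shows the two expressions coincide. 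Concretely it is cleanest to prove the stronger operator-level statement that $f_1\otimes_{i,j}f_2$ and $f_2\otimes_{j,i}f_1$ are images of each other under the canonical permutation of tensor factors that is an isometry of $\FH^{\otimes(l_1+l_2-2(i+j))}$; the norm equality is then immediate.

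Second, for \eqref{contraction symmetry inequality}: the left inequality is just that symmetrization $\tilde{(\cdot)}$ is an orthogonal projection on $\FH^{\otimes n}$, hence norm-nonincreasing; this requires no computation. For the right inequality, the standard argument is to view $f_1\otimes_{i,j}f_2$ as obtained by applying a partial inner product: group the contracted legs and apply Cauchy--Schwarz in the $\FH^{\otimes(i+j)}$ variable, bounding each partial contraction by $\norm{f_1}\cdot\norm{f_2}$ after summing over the uncontracted legs — this is the same computation as in the real case (cf.\ \cite[Appendix B]{NourPecc12}), adapted to the bookkeeping of holomorphic versus antiholomorphic legs.

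Third, for \eqref{fubi identity}: this is the ``Fubini-type'' identity and is the step I expect to be the main obstacle, because it is the one where the complex (holomorphic/antiholomorphic) structure genuinely enters and the index bookkeeping is delicate. The idea is to expand $\norm{f_1\otimes_{i,j}f_2}^2 = \langle f_1\otimes_{i,j}f_2,\, f_1\otimes_{i,j}f_2\rangle$ fully in coordinates: this produces a sum over $l_1+l_2-2(i+j)$ ``outer'' indices together with two sets of $i+j$ contracted indices, i.e.\ a total number of summation indices equal to $l_1+l_2$. The same total count is produced by the right-hand side: $f_1\otimes_{p_1-i,q_1-j}h_1$ lives in $\FH^{\otimes 2(i+j)}$ and pairing it against $h_2\otimes_{q_2-i,p_2-j}f_2$ (also in $\FH^{\otimes 2(i+j)}$) again involves $l_1+l_2$ summation indices. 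The work is to match the two sums term-by-term: one checks that the legs of $f_1$ that get contracted against $h_1$ (there are $p_1-i$ holomorphic and $q_1-j$ antiholomorphic) are exactly the ``outer'' legs of $f_1$ in the left-hand expansion, and that $h_1$'s reverse-conjugate relationship to $f_1$ converts the conjugate $\bar f_1$ appearing on the left into an honest $f_1$-contraction on the right; similarly for $f_2$ and $h_2$. I would set up the correspondence of indices carefully once and then observe both sides equal the same multi-indexed sum. The only real risk is a sign/conjugation or a transposition error, which is why I would do the coordinate bookkeeping explicitly rather than by a slick isometry argument.
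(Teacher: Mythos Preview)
Your proposal is correct and follows essentially the same route as the paper's proof, which is extremely terse: the paper simply says that \eqref{communicative law} is ``trivial'', that the two inequalities in \eqref{contraction symmetry inequality} follow from Minkowski's inequality and Cauchy--Schwarz respectively, and that \eqref{fubi identity} is ``obtained from the Fubini theorem''. Your coordinate-expansion/index-relabeling argument for \eqref{communicative law} and \eqref{fubi identity}, and your Cauchy--Schwarz argument for the right half of \eqref{contraction symmetry inequality}, are exactly the computations the paper is gesturing at; the only cosmetic difference is that for the left half of \eqref{contraction symmetry inequality} you invoke the fact that symmetrization is an orthogonal projection, whereas the paper cites Minkowski's inequality (applied to the average over permutations) --- these are two equally standard ways to see that symmetrization is norm-nonincreasing.
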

	\begin{proof} 
		The identity \eqref{communicative law} is trivial. The first inequality in \eqref{contraction symmetry inequality} is well-known and from Minkowski's inequality. The second inequality in \eqref{contraction symmetry inequality} is an immediate consequence of Cauchy-Schwarz inequality. The identity \eqref{fubi identity} can be obtained from the Fubini theorem.
	\end{proof}
	\begin{lemma}\label{norm inequality lemma}
		Let $f_j,\,j=1,2$ be as in Lemma~\ref{basic inequliayt}. Then for $0\leq i\leq p_1\wedge q_2$ and $0\leq j\leq q_1\wedge p_2$,
		\begin{equation}\label{norm inequality}
		\begin{aligned}
			2 \norm{f_1 {\otimes}_{i,j} f_2 }^2_{\FH^{\otimes ( l_1+l_2-2(i+j))}} &\le 2 \norm{f_1 {\otimes}_{p_1-i,q_1-j} h_1 }_{\FH^{\otimes ( 2( i+j))}} \cdot\norm{f_2 {\otimes}_{p_2-j,q_2-i} h_2 }_{\FH^{\otimes ( 2( i+j))}}\\
			&\le  \norm{f_1 {\otimes}_{p_1-i,q_1-j} h_1 }^2_{\FH^{\otimes ( 2( i+j))}} +\norm{f_2 {\otimes}_{p_2-j,q_2-i} h_2 }^2_{\FH^{\otimes ( 2( i+j))}}.
		\end{aligned}
	\end{equation}
		Especially, suppose that $f \in \FH^{\odot p}\otimes \FH^{\odot q}$, $h$ is the reverse complex conjugate of $f$ and $l=p+q$. Then for any $0\leq i,j\le p\wedge q$, 
		\begin{equation}\label{norm inequality 0}
			2 \norm{f {\otimes}_{i,j} f }^2_{\FH^{\otimes ( 2(l-i-j))}} \le  \norm{f {\otimes}_{p-i,q-j} h }^2_{\FH^{\otimes ( 2( i+j))}} +\norm{f {\otimes}_{p-j,q-i} h }^2_{\FH^{\otimes ( 2( i+j))}}.
		\end{equation}
	\end{lemma}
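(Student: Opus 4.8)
The plan is to derive the entire statement from the Fubini-type identity \eqref{fubi identity} already established in Lemma \ref{basic inequliayt}, combined only with the Cauchy--Schwarz inequality, the commutation relation \eqref{communicative law}, and the elementary bound $2ab\le a^2+b^2$. I would start from
$$\norm{f_1 \otimes_{i,j} f_2}^2_{\FH^{\otimes(l_1+l_2-2(i+j))}} = \innp{f_1\otimes_{p_1-i,q_1-j} h_1,\,h_2\otimes_{q_2-i, p_2-j} f_2}_{\FH^{\otimes(2(i+j))}},$$
observing first that both contractions on the right are legitimate and land in $\FH^{\odot(i+j)}\otimes\FH^{\odot(i+j)}=\FH^{\otimes 2(i+j)}$, so that the inner product is well-defined; this is the step where one checks that the shifted indices $p_1-i,\,q_1-j$ and $q_2-i,\,p_2-j$ fall inside the admissible ranges, which holds precisely because $0\le i\le p_1\wedge q_2$ and $0\le j\le q_1\wedge p_2$. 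Since the left-hand side is a nonnegative real number, applying Cauchy--Schwarz to the inner product on the right gives
$$\norm{f_1 \otimes_{i,j} f_2}^2 \le \norm{f_1\otimes_{p_1-i,q_1-j} h_1}\cdot\norm{h_2\otimes_{q_2-i, p_2-j} f_2},$$
and rewriting the second factor via \eqref{communicative law} as $\norm{h_2\otimes_{q_2-i,p_2-j} f_2}=\norm{f_2\otimes_{p_2-j,q_2-i} h_2}$, then multiplying by $2$, yields the first inequality in \eqref{norm inequality}.

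The second inequality in \eqref{norm inequality} is then immediate from $2ab\le a^2+b^2$ with $a=\norm{f_1\otimes_{p_1-i,q_1-j} h_1}$ and $b=\norm{f_2\otimes_{p_2-j,q_2-i} h_2}$. To obtain \eqref{norm inequality 0} I would simply specialize \eqref{norm inequality} to $f_1=f_2=f$, $p_1=p_2=p$, $q_1=q_2=q$, $h_1=h_2=h$: the hypotheses collapse to $0\le i,j\le p\wedge q$, the two right-hand factors become $\norm{f\otimes_{p-i,q-j}h}$ and $\norm{f\otimes_{p-j,q-i}h}$, and the chain above reads $2\norm{f\otimes_{i,j}f}^2\le 2\norm{f\otimes_{p-i,q-j}h}\cdot\norm{f\otimes_{p-j,q-i}h}\le\norm{f\otimes_{p-i,q-j}h}^2+\norm{f\otimes_{p-j,q-i}h}^2$, which is exactly the claimed bound.

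I do not expect any genuine analytic obstacle: once \eqref{fubi identity} is in hand, the argument is a one-line Cauchy--Schwarz plus an arithmetic-mean--geometric-mean step. The only point needing care is the bookkeeping of contraction multi-indices --- confirming that every contraction appearing on the right-hand sides is admissible under the stated ranges for $i$ and $j$, and that the two vectors entering the Cauchy--Schwarz estimate sit in the same tensor power $\FH^{\otimes 2(i+j)}$ so that the inner product and the norm bound are meaningful. This verification is routine given the range conditions already imposed in the statement.
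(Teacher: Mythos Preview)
Your proposal is correct and follows exactly the same route as the paper: apply the Fubini identity \eqref{fubi identity}, then Cauchy--Schwarz, then \eqref{communicative law}, then $2ab\le a^2+b^2$, and finally specialize to obtain \eqref{norm inequality 0}. The paper's proof is stated in one sentence, and you have simply unpacked that sentence with the appropriate index checks.
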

	\begin{proof}
		To show \eqref{norm inequality}, we apply first the identity  \eqref{fubi identity}, and then Cauchy-Schwarz inequality, the identity \eqref{communicative law} and the inequality $2ab\le a^2+b^2$. \eqref{norm inequality 0} is an immediate consequence of \eqref{norm inequality}.
	\end{proof}

	\begin{prop}\label{jproo key estimate}
		Denote $F_j=I_{p_j,q_j}(f_j),\,j=1,2$ and $l_j=p_j+q_j$. 
		Then there exists a constant $c$ depending on $p_j,q_j$ such that 
		\begin{equation}\label{key estimate}
		\begin{aligned}
			\quad & \mathrm{Cov}(\abs{F_1}^2,\, \abs{F_2}^2)-\abs{\E[F_1\bar{F}_2]}^2-\abs{\E[F_1F_2]}^2\\
			\le&\, c\Big[ \sum_{0<i+j<l_1}\norm{f_1 {\otimes}_{i,j}h_1}^2_{\FH^{\otimes(2(l_1-i-j))}}+\sum_{0<i+j<l_2}\norm{f_2 {\otimes}_{i,j}h_2}^2_{\FH^{\otimes(2(l_2-i-j))}} \\
			& +  \mathbb{1}_{\set{ (p_2,q_2)\curlyeqsucc (q_1,p_1)}} \norm{f_1}^2_{\FH^{\otimes l_1}}\cdot\norm{f_2\otimes_{p_2-q_1,q_2-p_1}h_2}_{\FH^{\otimes{2l_1} }} \\
			& +\mathbb{1}_{\set{ (p_2,q_2)\curlyeqsucc (p_1,q_1)}}\norm{f_1}^2_{\FH^{\otimes l_1}}\cdot\norm{f_2\otimes_{p_2-p_1,q_2-q_1}h_2}_{\FH^{\otimes{2l_1} }} \\
			& +\mathbb{1}_{\set{(p_1,\,q_1)\curlyeqsucc (q_2,\,p_2)}}\norm{f_2 }^2_{\FH^{\otimes l_2}}\cdot\norm{f_1\otimes_{p_1-q_2,q_1-p_2}h_1}_{\FH^{\otimes{2l_2} }}  \\
			& +\mathbb{1}_{\set{(p_1,\,q_1)\curlyeqsucc (p_2,\,q_2)}}\norm{f_2 }^2_{\FH^{\otimes l_2}}\cdot\norm{f_1\otimes_{p_1-p_2,q_1-q_2}h_1}_{\FH^{\otimes{2l_2} }}  \Big].
		\end{aligned}
	\end{equation}
	\end{prop}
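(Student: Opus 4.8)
The plan is to expand $\mathrm{Cov}(\abs{F_1}^2,\abs{F_2}^2)$ using the product formula \eqref{Product_formula} together with the isometry property \eqref{isometry property}, so as to write $\E[\abs{F_1}^2\abs{F_2}^2]$ as a sum over contraction norms. Concretely, I would first compute $\abs{F_1}^2=F_1\overline{F_1}=I_{p_1,q_1}(f_1)I_{q_1,p_1}(h_1)$ via \eqref{Product_formula}, and likewise $\abs{F_2}^2$, and then take the expectation of the product using \eqref{isometry property}; the only surviving terms are those where the two resulting multiple integrals have matching bi-degrees. A parallel (and in fact cleaner) route is to use the known moment expansion of $\E[\abs{F_1}^2\abs{F_2}^2]$ in terms of the objects $\innp{f_1\otimes_{i,j}f_2,\dots}$ and to subtract off the "Gaussian part" $2\abs{\E[F_1\bar F_2]}^2+\abs{\E[F_1F_2]}^2$ (which, after accounting for $\E[\abs{F_j}^2]\E[\abs{F_j}^2]$ cancelling with the covariance subtraction and $\E[F_1F_2]$, $\E[F_1\bar F_2]$ being the full-contraction terms), leaving precisely a sum of products of contraction norms $\norm{f_1\otimes_{i,j}f_2}^2$ type quantities and cross terms $\innp{f_1\otimes_{i,j}f_2,\,\cdots}$ indexed by the admissible contraction parameters with $0<i+j<l_1\wedge l_2$ or straddling cases.

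The second and main step is to control each of these surviving terms by the right-hand side of \eqref{key estimate}. For the "diagonal'' terms coming from $\abs{F_1}^2$ or $\abs{F_2}^2$ alone, Lemma \ref{norm inequality lemma} — specifically \eqref{norm inequality 0} — immediately bounds $\norm{f_1\otimes_{i,j}f_1}^2$ by a sum of the $\norm{f_1\otimes_{p_1-i,q_1-j}h_1}^2$ appearing in the first line of \eqref{key estimate}, and symmetrically for $f_2$; any cross inner products $\innp{f_1\otimes_{i,j}f_1,\cdot}$ reduce by Cauchy–Schwarz to these same norms. The genuinely new work is the "mixed'' terms involving both $f_1$ and $f_2$: here one uses the Fubini-type identity \eqref{fubi identity} to rewrite $\norm{f_1\otimes_{i,j}f_2}^2$ as an inner product $\innp{f_1\otimes_{p_1-i,q_1-j}h_1,\,h_2\otimes_{q_2-i,p_2-j}f_2}$, and then applies Cauchy–Schwarz in $\FH^{\otimes 2(i+j)}$ together with the norm bound \eqref{contraction symmetry inequality}, i.e. $\norm{h_2\otimes_{q_2-i,p_2-j}f_2}\le\norm{f_2}\cdot\norm{f_2}$ when one of the contraction orders is maximal. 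The maximality condition is exactly what forces the appearance of the indicator functions: a contraction $f_2\otimes_{p_2-q_1,q_2-p_1}h_2$ (or the three analogous choices) only makes sense and only yields the degree-$2l_1$ object when $p_2\ge q_1$ and $q_2\ge p_1$, that is, when $(p_2,q_2)\curlyeqsucc(q_1,p_1)$; the four indicator terms in \eqref{key estimate} enumerate the four ways the two pairs $(p_1,q_1)$, $(p_2,q_2)$ (and their reverses) can be comparable under $\curlyeqsucc$.

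Thus the skeleton is: (1) expand the covariance via \eqref{Product_formula}–\eqref{isometry property}, identify which contraction-indexed terms survive after subtracting $\abs{\E[F_1\bar F_2]}^2+\abs{\E[F_1F_2]}^2$; (2) split them into pure-$f_1$, pure-$f_2$, and mixed terms; (3) bound pure terms by Lemma \ref{norm inequality lemma}; (4) bound mixed terms via \eqref{fubi identity}, Cauchy–Schwarz, \eqref{contraction symmetry inequality}, carefully tracking which contraction orders are legal (this is where the $\curlyeqsucc$ indicators enter); and (5) collect constants depending only on the binomial coefficients in \eqref{Product_formula} and on $p_j,q_j$. I expect the main obstacle to be bookkeeping: keeping straight the bi-degree matching in step (1) and, in step (4), correctly pairing each mixed contraction $f_1\otimes_{i,j}f_2$ with the particular maximal-order contraction of $f_2$ (resp. $f_1$) against $h_2$ (resp. $h_1$) that \eqref{fubi identity} produces, so that the resulting bound genuinely matches one of the four indicator terms rather than an inadmissible one. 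In particular one must check that when neither $(p_1,q_1)$ nor $(p_2,q_2)$ is comparable to the other (nor to the reverse), every mixed term is already absorbed into the pure $\norm{f_1\otimes_{i,j}h_1}^2$, $\norm{f_2\otimes_{i,j}h_2}^2$ sums — which is the ``special feature'' advertised in Remark \ref{remark3.12}(ii).
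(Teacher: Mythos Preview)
Your strategy is essentially the paper's: expand the covariance via the product formula and isometry (the paper imports the resulting identity directly as \eqref{express 4moment} from \cite{ch17}, Lemma~3.1), then bound each contraction term using Lemma~\ref{norm inequality lemma} and the Fubini identity \eqref{fubi identity}, with the boundary contractions $k+k'=l$ and $i+j=l'$ producing the four $\curlyeqsucc$-indicator terms exactly as you describe.

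One point in your write-up needs correction. There are no ``pure-$f_1$'' or ``pure-$f_2$'' terms in the expansion: every summand in \eqref{express 4moment} is a mixed contraction, either of the form $\norm{f_1\otimes_{k,k'}h_2}^2$ or built from $f_1\tilde\otimes_{i,j}f_2$. The quantities $\norm{f_j\otimes_{i,j}h_j}^2$ on the right of \eqref{key estimate} do not appear in the raw expansion; they arise only \emph{after} applying \eqref{norm inequality} to the mixed contractions (this is how the paper handles the first and third terms, inequalities \eqref{first term inequality}--\eqref{third term}). So step (2)--(3) of your skeleton should be collapsed into step (4): every surviving term is mixed, and Lemma~\ref{norm inequality lemma} is what converts them into the pure $f_j\otimes h_j$ norms. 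The indicator terms then come from the extremal contractions $k+k'=l$ and $i+j=l'$ (second and fourth terms of \eqref{express 4moment}), where one side of \eqref{fubi identity} becomes a full norm $\norm{f_j}^2$ and the other a genuine middle contraction --- precisely when the relevant $\curlyeqsucc$ relation holds; in the incomparable cases both factors are non-extremal and again fall into the first two sums, as you correctly anticipated.
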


	\begin{proof}
		Lemma 3.1 of \cite{ch17} implies that 
		\begin{equation}\label{express 4moment}
		\begin{aligned}
			&\quad \mathrm{Cov}(\abs{F_1}^2,\, \abs{F_2}^2)-\abs{\E[F_1\bar{F}_2]}^2-\abs{\E[F_1F_2]}^2 \\
			&=\sum_{0 <k+k'< l}{p_1\choose k}{q_1\choose k'}{q_2\choose k'}{p_2\choose k}  p_1!q_1! p_2!q_2!  \norm{f_1\otimes_{k,k'}h_2}^2_{\FH^{\otimes(m-2(k+k'))}} \\
			&\quad + \mathbb{1}_{(p_1,q_1)\neq (p_2,q_2)}\mathbb{1}_{k+k'= l} {p_1\choose k}{q_1\choose k'}{q_2\choose k'}{p_2\choose k}  p_1!q_1! p_2!q_2!  \norm{f_1\otimes_{k,k'}h_2}^2_{\FH^{\otimes(m-2l)}} \\
			&\quad +\sum_{r=1}^{l'-1}(p_1+p_2-r)!(q_1+q_2-r)!\norm{\phi_r}^2_{\FH^{\otimes(m-2r)}}\\
			&\quad + \mathbb{1}_{(p_1,q_1)\neq (q_2,p_2)}(p_1+p_2-l')!(q_1+q_2-l')! \norm{\phi_{l'}}^2_{\FH^{\otimes(m-2l')}} , 
		\end{aligned} 
	\end{equation}
	where $m=l_1+l_2$, $l'=p_1\wedge q_2+ q_1\wedge p_2$, $l= p_1\wedge p_2+ q_1\wedge q_2$ and 
		\begin{align*}
			\phi_r=\sum_{i+j=r} {p_1\choose i}{q_1\choose j}{q_2\choose i }{p_2\choose j}  i!j!\, f_1\tilde{\otimes}_{i,j} f_2, \mbox{ for } 1\leq r\leq l'.
		\end{align*}
		For the first term of right hand side of \eqref{express 4moment}, applying \eqref{communicative law} and \eqref{norm inequality}, we have that
		\begin{equation}\label{first term inequality}
		\begin{aligned}
			&\sum_{0 <k+k'< l}{p_1\choose k}{q_1\choose k'}{q_2\choose k'}{p_2\choose k}  p_1!q_1! p_2!q_2! \norm{f_1\otimes_{k,k'}h_2}^2_{\FH^{\otimes(m-2(k+k'))}} \\
			\le &\,c\sum_{0 <k+k'< l} \norm{f_1\otimes_{p_1-k,q_1-k'}h_1}^2_{\FH^{\otimes(2(k+k'))}}+\norm{h_2\otimes_{q_2-k',p_2-k}f_2}^2_{\FH^{\otimes(2(k+k'))}} \\
			\le &\,c\sum_{0 <k+k'< l} \norm{f_1\otimes_{p_1-k,q_1-k'}h_1}^2_{\FH^{\otimes(2(k+k'))}}+\norm{f_2\otimes_{p_2-k,q_2-k'}h_2}^2_{\FH^{\otimes(2(k+k'))}}  \\
			\le &\,c\Big[ \sum_{0<i+j<l_1}\norm{f_1 {\otimes}_{i,j}h_1}^2_{\FH^{\otimes(2(l_1-i-j))}}+\sum_{0<i+j<l_2}\norm{f_2 {\otimes}_{i,j}h_2}^2_{\FH^{\otimes(2(l_2-i-j))}}\Big].
		\end{aligned}
	\end{equation}
		For the third term of right hand side of \eqref{express 4moment}, By Minkowski's inequality, power means inequality and similar argument of the proof of \eqref{first term inequality}, we have that
		\begin{equation}\label{third term}
		\begin{aligned}
			&\sum_{r=1}^{l'-1}(p_1+q_2-r)!(q_1+p_2-r)!\norm{\phi_r}^2_{\FH^{\otimes(m-2r)}}  \\
			\le&\, c \sum_{0 <k+k'< l'} \norm{f_1\otimes_{k,k'}f_2}^2_{\FH^{\otimes(m-2(k+k'))}}\\
			\le&\, c\Big[ \sum_{0<i+j<l_1}\norm{f_1 {\otimes}_{i,j}h_1}^2_{\FH^{\otimes(2(l_1-i-j))}}+\sum_{0<i+j<l_2}\norm{f_2 {\otimes}_{i,j}h_2}^2_{\FH^{\otimes(2(l_2-i-j))}}\Big].
		\end{aligned}
	\end{equation}
		For the second term of right hand side of \eqref{express 4moment}, applying Lemma \ref{norm inequality lemma} and Equation \eqref{communicative law}, we have that
		\begin{equation}\label{second term}
		\begin{aligned}
			& \mathbb{1}_{(p_1,q_1)\neq (p_2,q_2)}\mathbb{1}_{k+k'= l} {p_1\choose k}{q_1\choose k'}{q_2\choose k'}{p_2\choose k}  p_1!q_1! p_2!q_2!  \norm{f_1\otimes_{k,k'}h_2}^2_{\FH^{\otimes(m-2l)}}\\
			\le&\, c\Big[ \mathbb{1}_{\set{(p_2,\,q_2)\curlyeqsucc (p_1,\,q_1)}}\norm{f_1}^2_{\FH^{\otimes l_1}}\cdot\norm{f_2\otimes_{p_2-p_1,q_2-q_1}h_2}_{\FH^{\otimes{2l_1} }} \\
			& +\mathbb{1}_{\set{p_1>p_2,q_1<q_2}}(\norm{f_1\otimes_{p_1-p_2,0}h_1}^2_{\FH^{\otimes 2l}}+\norm{f_2\otimes_{0,q_2-q_1}h_2}^2_{\FH^{\otimes{2l} }})\\
			&+ \mathbb{1}_{\set{p_1<p_2,q_1>q_2}}(\norm{f_1\otimes_{0,q_1-q_2}h_1}^2_{\FH^{\otimes 2l}}+\norm{f_2\otimes_{p_2-p_1,0}h_2}^2_{\FH^{\otimes{2l} }})\\
			&+\mathbb{1}_{\set{(p_1,\,q_1)\curlyeqsucc (p_2,\,q_2)}}\norm{f_2 }^2_{\FH^{\otimes l_2}}\cdot\norm{f_1\otimes_{p_1-p_2,q_1-q_2}h_1}_{\FH^{\otimes{2l_2} }} \Big].
		\end{aligned}
	\end{equation}
		Similarly, for the fourth term of right hand side of \eqref{express 4moment}, we have that
		\begin{equation}\label{fourth term inequality}
		\begin{aligned}
			&\mathbb{1}_{(p_1,q_1)\neq (q_2,p_2)}(p_1+p_2-l')!(q_1+q_2-l')! \norm{\phi_{l'}}^2_{\FH^{\otimes(m-2l')}} \\
			\le&\, c\mathbb{1}_{(p_1,q_1)\neq (q_2,p_2)}\norm{f_1\otimes_{p_1\wedge q_2,p_2\wedge q_1}f_2}_{\FH^{\otimes{2l_1} }}^2 \\
			\le&\, c\Big[ \mathbb{1}_{\set{(q_2,\,p_2)\curlyeqsucc (p_1,\,q_1)}}\norm{f_1}^2_{\FH^{\otimes l_1}}\cdot\norm{f_2\otimes_{p_2-q_1,q_2-p_1}h_2}_{\FH^{\otimes{2l_1} }} \\
			&\quad +\mathbb{1}_{\set{p_1>q_2,q_1<p_2}}(\norm{f_1\otimes_{p_1-q_2,0}h_1}^2_{\FH^{\otimes 2l'}}+\norm{f_2\otimes_{ p_2-q_1,0}h_2}^2_{\FH^{\otimes{2l'} }})\\
			&\quad+ \mathbb{1}_{\set{p_1<q_2,q_1>p_2}}(\norm{f_1\otimes_{0,q_1-p_2}h_1}^2_{\FH^{\otimes 2l'}}+\norm{f_2\otimes_{0,q_2-p_1}h_2}^2_{\FH^{\otimes{2l'} }})\\
			&\quad +\mathbb{1}_{\set{(p_1,\,q_1)\curlyeqsucc (q_2,\,p_2)}}\norm{f_2 }^2_{\FH^{\otimes l_2}}\cdot\norm{f_1\otimes_{p_1-q_2,q_1-p_2}h_1}_{\FH^{\otimes{2l_2} }} \Big].
		\end{aligned}
	\end{equation}
		Substituting the inequalities \eqref{first term inequality}-\eqref{fourth term inequality} into the identity \eqref{express 4moment}, we obtain the desired estimate \eqref{key estimate}.
	\end{proof}

	\begin{proposition}\label{be bound prop 63}
		If $F=I_{p,q}(f)$ with $f\in  \FH^{\odot p}\otimes \FH^{\odot q}$ , 
		then we have that
		\begin{equation}\label{contraction moment}
		\begin{aligned}
			c_1(p,q)   \sum_{0<i+j<l}\norm{f {\otimes}_{i,j}h}^2_{\FH^{\otimes(2(l-i-j))}}&\le \E[\abs{F}^4]-2 \big(\E[\abs{F}^2]\big)^2  -\abs{E[{F}^2]}^2\\&\le c_2(p,q)   \sum_{0<i+j<l}\norm{f {\otimes}_{i,j}h}^2_{\FH^{\otimes(2(l-i-j))}} ,
		\end{aligned}
	\end{equation}
		where $l=p+q$, $c_1(p,q)$ and $c_2(p,q)$ are two constants depending on $p,q$, and $h$ is the reverse complex conjugate of $f$.
	\end{proposition}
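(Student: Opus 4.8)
The plan is to obtain both inequalities by specializing the two two-component results just proved to the diagonal case $F_1=F_2=F$, so that no fresh computation is needed. I would first record the elementary reduction: on specializing \eqref{key estimate} to $F_1=F_2=F$ its left-hand side becomes $\mathrm{Cov}(\abs{F}^2,\abs{F}^2)-\big(\E[\abs{F}^2]\big)^2-\abs{\E[F^2]}^2$, because $\E[F\bar F]=\E[\abs{F}^2]$ is real; and since $\mathrm{Cov}(\abs{F}^2,\abs{F}^2)=\E[\abs{F}^4]-\big(\E[\abs{F}^2]\big)^2$, this equals exactly $\E[\abs{F}^4]-2\big(\E[\abs{F}^2]\big)^2-\abs{\E[F^2]}^2$, the quantity to be sandwiched.

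For the upper bound I would apply Proposition~\ref{jproo key estimate} with $F_1=F_2=F$, hence $(p_1,q_1)=(p_2,q_2)=(p,q)$, $l_1=l_2=l$, $f_1=f_2=f$ and $h_1=h_2=h$. The point is that all four $\curlyeqsucc$-indicators in \eqref{key estimate} vanish on the diagonal: the relation $\curlyeqsucc$ excludes equality, so $\set{(p,q)\curlyeqsucc(p,q)}$ never occurs, while $(p,q)\curlyeqsucc(q,p)$ would force $p\ge q$ and $q\ge p$, i.e. $(p,q)=(q,p)$, again excluded. Thus \eqref{key estimate} collapses to a constant multiple of $\sum_{0<i+j<l}\norm{f\otimes_{i,j}h}^2_{\FH^{\otimes(2(l-i-j))}}$, which is the right-hand inequality with a suitable $c_2(p,q)$.

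For the lower bound I would return to the exact chaotic expansion \eqref{express 4moment} (Lemma~3.1 of \cite{ch17}), again with $F_1=F_2=F$, so that $m=2l$, $l=p+q$, $l'=2(p\wedge q)$ and $h_1=h_2=h$. Every summand on the right-hand side of \eqref{express 4moment} is a non-negative constant times a squared $\FH^{\otimes\,\cdot}$-norm (the factorials and binomial coefficients involved are all well defined and positive in the relevant range), so the whole expression is in particular $\ge 0$. Discarding all summands except the first sum, and using that its coefficients $\binom{p}{k}^2\binom{q}{k'}^2(p!)^2(q!)^2$ are bounded below by some $c_1(p,q)>0$ over the finite index set, one obtains $\E[\abs{F}^4]-2\big(\E[\abs{F}^2]\big)^2-\abs{\E[F^2]}^2\ge c_1(p,q)\sum_{0<i+j<l}\norm{f\otimes_{i,j}h}^2_{\FH^{\otimes(2(l-i-j))}}$, which is the left-hand inequality.

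The whole argument is essentially bookkeeping once Proposition~\ref{jproo key estimate} and the identity \eqref{express 4moment} are in hand, so I do not anticipate a genuine obstacle; the only points that deserve a line of justification are that the summation range $\set{0<k+k'<l}$ occurring in \eqref{express 4moment} coincides with $\set{0<i+j<l,\ 0\le i\le p,\ 0\le j\le q}$ from the statement, and that on the diagonal both the $\curlyeqsucc$-indicators and the boundary term $\mathbb{1}_{(p_1,q_1)\neq(p_2,q_2)}$ (together with the $\phi_{l'}$-term carrying $\mathbb{1}_{(p_1,q_1)\neq(q_2,p_2)}$) are irrelevant for the lower bound because they only contribute non-negatively.
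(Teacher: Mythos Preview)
Your proposal is correct and follows essentially the same approach as the paper's proof: both specialize Proposition~\ref{jproo key estimate} to $F_1=F_2=F$ for the upper bound, and both extract the lower bound from the first sum in the exact expansion \eqref{express 4moment} by bounding the binomial coefficients below over the finite index set. Your write-up is in fact slightly more explicit than the paper's in spelling out why the $\curlyeqsucc$-indicators vanish on the diagonal and in noting the matching of the summation ranges.
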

	\begin{proof}
		Taking $F_2=F_1=F$ in Proposition \ref{jproo key estimate} and Equation \eqref{express 4moment}, we obtain that 
		\begin{align*}
			\E[\abs{F}^4]-2 \big(\E[\abs{F}^2]\big)^2 -\abs{E[{F}^2]}^2
			&=\mathrm{Cov}(\abs{F}^2,\, \abs{F}^2)-\big(\E[\abs{F}^2]\big)^2-\abs{E[{F}^2]}^2\\
			&\leq  c_2(p,q)   \sum_{0<i+j<l}\norm{f {\otimes}_{i,j}h}^2_{\FH^{\otimes(2(l-i-j))}},
		\end{align*} 
		and 
		\begin{equation}\label{revised version1}
		\begin{aligned}
			&\quad\E[\abs{F}^4]-2 \big(\E[\abs{F}^2]\big)^2 -\abs{\E[{F}^2]}^2\\
			&=\sum_{0 <i+j<l}{p\choose i}^2{q\choose j}^2 (p!q!)^2  \norm{f\otimes_{i,j}h}^2_{\FH^{\otimes(2(l-i-j))}} \\
			&\quad +\sum_{r=1}^{l'-1}(2p-r)!(2q-r)!\norm{\varphi_r}^2_{\FH^{\otimes(2(l-r))}}\\
			&\quad+	\mathbf{1}_{\left\lbrace p\neq q\right\rbrace }(2p-l')!(2q-l')!\binom{p}{p\wedge q}^4\binom{q}{p\wedge q}^4(p\wedge q )!^4\norm{f\otimes_{p\wedge q,p\wedge q}f}^2_{\FH^{\otimes(2(l-l'))}},
		\end{aligned}
	\end{equation}
		where 
		\begin{equation*}
			\varphi_r=\sum_{i+j=r}\binom{p}{i}\binom{q}{i}\binom{q}{j}\binom{p}{j}i!j!f\tilde{\otimes}_{i,j} f.
		\end{equation*}
		Taking $c_1(p,q)=\min\Big\{\binom{p}{i}^2\binom{q}{j}^2 (p!q!)^2: 0<i+j<l\Big\}$, we get that
		\begin{align*}
			\E[\abs{F}^4]-2 \big(\E[\abs{F}^2]\big)^2  -\abs{\E[{F}^2]}^2\ge c_1(p,q)   \sum_{0<i+j<l}\norm{f {\otimes}_{i,j}h}^2_{\FH^{\otimes(2(l-i-j))}}.
		\end{align*}
	\end{proof}
	\begin{remark}
		Analogue to \eqref{revised version1}, one can show by using the similar argument that 
		\begin{equation}\label{revised version2}
			\begin{aligned}
				&\quad\E[\abs{F}^4]-2 \big(\E[\abs{F}^2]\big)^2 -\abs{\E[{F}^2]}^2\\
				&=\sum_{0 <i+j<l'}{p\choose i}{q\choose i}{q\choose j}{p\choose j} (p!q!)^2  \norm{f\otimes_{i,j}f}^2_{\FH^{\otimes(2(l-i-j))}} \\
				&\quad +\sum_{r=1}^{l-1}((l-r)!)^2\norm{\psi_r}^2_{\FH^{\otimes(2(l-r))}}\\
				&\quad+	\mathbf{1}_{\left\lbrace p\neq q\right\rbrace }\binom{p}{p\wedge q}^2\binom{q}{p\wedge q}^2(p! q! )^2\norm{f\otimes_{p\wedge q,p\wedge q}f}^2_{\FH^{\otimes(2(l-l'))}},
			\end{aligned}
		\end{equation}
		where $l'=2(p\wedge q)$ and
		\begin{equation*}
			\psi_r=\sum_{i+j=r}\binom{p}{i}^2\binom{q}{j}^2i!j!f\tilde{\otimes}_{i,j} h.
		\end{equation*}
		 Here we revise \cite[Equation 3.10, Equation 3.11]{cl19} as \eqref{revised version1} and \eqref{revised version2}. Note that the minor error of \cite[Equation 3.10, Equation 3.11]{cl19} dose not affect the main results in \cite{cl19}, since by \eqref{norm inequality 0}, if $p\neq q$, 
	$$\norm{f\otimes_{p\wedge q,p\wedge q}f}^2_{\FH^{\otimes(2(l-l'))}}\leq  \norm{f\otimes_{p-p\wedge q,q-p\wedge q}h}^2_{\FH^{\otimes 2l'}}\leq \sum_{0<i+j<l}\norm{f {\otimes}_{i,j}h}^2_{\FH^{\otimes(2(l-i-j))}}.$$
	\end{remark}

	\begin{lem}\label{3moment}
		If $F=I_{p,q}(f)$ with $f\in  \FH^{\odot p}\otimes \FH^{\odot q}$, then we have that
		\begin{equation}\label{3moment_1}
			\E\left[ F^3\right]=\mathbf{1}_{\left\lbrace p=q \right\rbrace }\sum_{i=0}^{p} i!(p-i)!(p!)^2\binom{p}{i}^4\left\langle f\tilde{\otimes}_{i,p-i}f,h \right\rangle _{\FH^{\otimes (2p)}},
		\end{equation}
		where $h$ is the reverse complex conjugate of $f$, and 
		\begin{equation}\label{3moment_2}
			\E\left[ F^2\bar{F}\right]=\mathbf{1}_{\left\lbrace p=q \right\rbrace }\sum_{i=0}^{p} i!(p-i)!(p!)^2\binom{p}{i}^4\left\langle f\tilde{\otimes}_{i,p-i}f,f \right\rangle _{\FH^{\otimes (2p)}}.
		\end{equation}
	\end{lem}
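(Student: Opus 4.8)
The plan is to reduce both identities to a single application of the product formula \eqref{Product_formula} followed by the isometry property \eqref{isometry property}, exploiting the orthogonality of the Wiener chaoses.

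First I would write $F^2=I_{p,q}(f)I_{p,q}(f)$ and apply \eqref{Product_formula} with $a=c=p$ and $b=d=q$, obtaining
\begin{equation*}
  F^2=\sum_{i=0}^{p\wedge q}\sum_{j=0}^{p\wedge q}\binom{p}{i}\binom{q}{i}\binom{q}{j}\binom{p}{j}\,i!\,j!\;I_{2p-i-j,\,2q-i-j}\big(f\tilde\otimes_{i,j}f\big),
\end{equation*}
where the symmetrized contraction appears because $I_{m,n}(g)=I_{m,n}(\tilde g)$ by definition. Then, for the third moment I would use $F=I_{p,q}(f)=\overline{I_{q,p}(h)}$ (this holds since $\overline{I_{p,q}(f)}=I_{q,p}(h)$ and reverse complex conjugation is an involution), so that \eqref{isometry property} gives $\E\big[I_{a,b}(g)\,\overline{I_{q,p}(h)}\big]=\mathbf{1}_{\{a=q\}}\mathbf{1}_{\{b=p\}}\,a!\,b!\,\langle g,h\rangle$; consequently, in $\E[F^3]=\E[F^2\cdot F]$ only those summands of the expansion of $F^2$ with $2p-i-j=q$ and $2q-i-j=p$ survive. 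Subtracting these two equations forces $p=q$ — this is exactly where the indicator $\mathbf{1}_{\{p=q\}}$ comes from — and then $i+j=p$, i.e.\ $j=p-i$, so the double sum collapses to a single one. For $\E[F^2\bar F]$ the same reasoning applies with $\bar F=I_{q,p}(h)=\overline{I_{p,q}(f)}$: now $\E[I_{a,b}(g)\overline{I_{p,q}(f)}]\neq0$ forces $a=p$, $b=q$, which again yields $p=q$ and $j=p-i$.

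Finally, on the surviving terms ($p=q$, $j=p-i$) the kernel $f\tilde\otimes_{i,p-i}f$ lies in $\FH^{\odot p}\otimes\FH^{\odot p}$, and applying \eqref{isometry property} once more gives $\E\big[I_{p,p}(f\tilde\otimes_{i,p-i}f)\overline{I_{p,p}(h)}\big]=(p!)^2\langle f\tilde\otimes_{i,p-i}f,\,h\rangle_{\FH^{\otimes 2p}}$, and likewise with $h$ replaced by $f$ for $\E[F^2\bar F]$. Combining this with the product-formula coefficient $\binom{p}{i}\binom{p}{i}\binom{p}{p-i}\binom{p}{p-i}i!(p-i)!=\binom{p}{i}^4 i!(p-i)!$ (using $\binom{p}{p-i}=\binom{p}{i}$) and summing over $i=0,\dots,p$ produces \eqref{3moment_1} and \eqref{3moment_2}. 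The step needing the most care is the middle one: checking that the chaos-orthogonality constraints genuinely force both $p=q$ and $i+j=p$, and keeping the conjugation bookkeeping ($h$ = reverse complex conjugate of $f$) straight throughout; everything else is routine combinatorics.
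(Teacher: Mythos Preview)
Your proposal is correct and follows essentially the same approach as the paper's proof: expand $F^2$ via the product formula \eqref{Product_formula}, then use the isometry \eqref{isometry property} together with the representation $F=\overline{I_{q,p}(h)}$ (respectively $\bar F=\overline{I_{p,q}(f)}$) to see that only the summands with $2p-i-j=q$, $2q-i-j=p$ (resp.\ $=p$, $=q$) survive, forcing $p=q$ and $j=p-i$, and finally simplify the binomial coefficients. Your write-up is in fact slightly more explicit about the conjugation bookkeeping and the combinatorial simplification $\binom{p}{i}^2\binom{p}{p-i}^2=\binom{p}{i}^4$ than the paper's.
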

	
	\begin{proof}
		Applying product formula \eqref{Product_formula}, we have that
		\begin{equation*}
			F^2=\sum_{i=0}^{p\wedge q}\sum_{j=0}^{p\wedge q}i!j!\binom{p}{i}\binom{q}{i}\binom{p}{j}\binom{q}{j}I_{2p-(i+j),2q-(i+j)}\left( f\tilde{\otimes}_{i,j}f \right) 
		\end{equation*}
		Combining product formula \eqref{Product_formula} and isometry property \eqref{isometry property}, we obtain that 
		\begin{align*}
			\E\left[ F^3\right]&=\E\left[ F^2\bar{\bar{F}}\right]\\
			&=\sum_{i=0}^{p\wedge q}\sum_{j=0}^{p\wedge q}i!j!\binom{p}{i}\binom{q}{i}\binom{p}{j}\binom{q}{j}\E\left[ I_{2p-(i+j),2q-(i+j)}\left( f\tilde{\otimes}_{i,j}f \right) \overline{{I}_{q,p}(h)}\right] \\
			&=\sum_{i=0}^{p\wedge q}\sum_{j=0}^{p\wedge q}i!j!\binom{p}{i}\binom{q}{i}\binom{p}{j}\binom{q}{j}\mathbf{1}_{\left\lbrace 2p-(i+j)=q\right\rbrace }\mathbf{1}_{\left\lbrace 2q-(i+j)=p\right\rbrace } p!q!\left\langle  f\tilde{\otimes}_{i,j}f ,\right\rangle _{\FH^{\otimes (p+q)}}\\
			&=\mathbf{1}_{\left\lbrace p=q \right\rbrace }\sum_{i=0}^{p} i!(p-i)!(p!)^2\binom{p}{i}^4\left\langle f\tilde{\otimes}_{i,p-i}f,h \right\rangle _{\FH^{\otimes (2p)}}.
		\end{align*}
		Using the similar argument, we can get \eqref{3moment_2}. Then we finish the proof.
	\end{proof}
	
	\subsection{Proofs of the main results}\label{section 4.2}

	\noindent{\it Proof of Theorem~\ref{equilent cond}.\,}
	Let $l'=2(p\wedge q)$.
	It is known that \textup{(i)} is equivalent to that
	\begin{itemize}
		\item[\textup{(ii')}]  $\norm{f_n\ {{\otimes}}_{i,j} f_n}_{\FH^{\otimes ( 2(l-i-j))}}\to 0 $ for any $0<i+j\le l'-1$ and $\norm{f_n{\otimes}_{i,j} h_n}_{\FH^{\otimes ( 2(l-i-j))}}\to 0 $ for any $0<i+j\le l-1$,
	\end{itemize}please refer to \cite[Theorem 1.3]{chw17} or \cite[Theorem 3.14]{ cl19}. The equivalence between  \textup{(ii)} and  \textup{(ii')} is followed from Lemma~\ref{norm inequality lemma}.
	{\hfill\large{$\Box$}}\\


	\noindent{\it Proof of Theorem~\ref{cFMBE bound}.\,}
	Denote $F=A+\mi B$. Then the decomposition theorem of \cite{cl17, cl19} implies that both $A$ and $B$  are $l$-th real Wiener-It\^{o} multiple integrals. That is to say, the 2-dimensional vector $(A,\,B)$ is a bivariate real Wiener-It\^{o} multiple integrals. 
	
	It follows from the fourth moment Berry-Ess\'{e}en bound (see Theorem 6.2.2 of \cite{NourPecc12} ) that 
	\begin{align}\label{bds 00}
		d_{W}(F, N)\le  4\sqrt{ \sum_{r=1}^{l-1} {2r \choose r}}  \frac{ \sqrt{\lambda_1}}{\lambda_2} (\abs{x_1}^{\frac12}+\abs{x_2}^{\frac12}),
	\end{align} where $x_1=\E[A^4]-3(\E[A^2])^2,\,x_2=\E[B^4]-3(\E[B^2])^2$.
	
	Since $x_1,\,x_2\ge 0$ (see Corollary 5.2.11 of \cite{NourPecc12}), we have that 
	\begin{align}\label{bds 01}
		\abs{x_1}^{\frac12}+\abs{x_2}^{\frac12}\le \sqrt{2(x_1+x_2)}.
	\end{align} Thus, 
\begin{equation}\label{bushengshi 1}
	\begin{aligned}
		x_1+x_2&= \E[A^4]-3(\E[A^2])^2+ \E[B^4]-3(\E[B^2])^2  \\
		&=\E[\abs{F}^4]-2 \big(\E[\abs{F}^2]\big)^2- \big|\E[F^2] \big|^2-2\left(\E\left[A^{2}B^{2} \right]-\E\left[A^{2}\right] \E\left[ B^{2}\right]-2\left(\E\left[ AB\right]  \right) ^2  \right) \\
		&\le \E[\abs{F}^4]-2 \big(\E[\abs{F}^2]\big)^2- \big| \E[F^2] \big|^2,
	\end{aligned}
\end{equation}
	where in the last line we use the fact (see \cite[Lemma 4.8]{cl17}):
	\begin{equation*}
		2\left(\E\left[A^{2}B^{2} \right]-\E\left[A^{2}\right] \E\left[ B^{2}\right]-2\left(\E\left[ AB\right]  \right) ^2\right) \geq0,
	\end{equation*}
	which can be proved by combining product formula \eqref{Product_formula}, isometry property \eqref{isometry property} and some combinatorics.
	
	Substituting the two inequalities  (\ref{bds 01}) and (\ref{bushengshi 1}) into (\ref{bds 00}), we obtain the desired Fourth moment Berry-Ess\'{e}en bound (\ref{Fourth moment BEB }). 
	Together with Proposition~\ref{be bound prop 63}, the above Fourth Moment Berry-Ess\'{e}en bound gives the estimate \eqref{bound 2}.
	
	By \cite[Theorem 4.1]{c23}, we can further obtain that 
	\begin{align*}
		&d_{W}(F, N)\geq \sup\left\lbrace\left|\E\left[ g(\RE F,\IM F) \right] - \E\left[ g(\RE N,\IM N) \right]   \right|  \right\rbrace\\
		\geq& c(p,q,a,b,\sigma)\max\left\lbrace \left| \E\left[F^3 \right] \right|, \left|\E\left[F^2\bar{F} \right] \right|, \E\left[\left| F \right| ^4 \right]-2\left(\E\left[ \left| F \right| ^2\right]  \right) ^2-\left| \E\left[ F^2\right] \right| ^2 \right\rbrace,
	\end{align*}
	where $g:\mathbb{R}^2\rightarrow \mathbb{R}$ runs over the class of all four-times continuously differentiable functions such that $g$ and all of its derivatives of order up to four are bounded by one. Combining Proposition~\ref{be bound prop 63} and Lemma~\ref{3moment}, we obtain the lower bound \eqref{lower bound}.
	{\hfill\large{$\Box$}}\\
	
	\noindent{\it Proof of Theorem~\ref{thm Camp}.\,} We denote by $d \times d$ matrices $\Sigma_{\RE}=(\Sigma_{\RE}(i,j))_{1\leq i,j\leq d}$ and $\Sigma_{\IM}=(\Sigma_{\IM}(i,j))_{1\leq i,j\leq d}$ the real and imaginary parts of $\Sigma$ respectively, namely, $\Sigma=\Sigma_{\RE}+\im \Sigma_{\IM}$.  Denote $F_{n}^{j}=A_{n}^{j}+\mi B_{n}^{j}$ for $1\leq j\leq d$. Since  $F_n $  is circularly symmetric, 
	we have $\E[F_n F_n^{'}] =0$. This together with \eqref{Fj bar Fk} ($\E[F_n \overline{F_n^{'}}] \rightarrow \Sigma$) implies that 
	\begin{align}
		\E\left[ A_n^iA_n^j\right]&=\E\left[ B_n^iB_n^j\right] \rightarrow \frac{1}{2}\Sigma_{\RE}(i,j),\\
		\E\left[ A_n^iB_n^j\right]&=-\E\left[ B_n^iA_n^j\right] \rightarrow -\frac{1}{2}\Sigma_{\IM}(i,j).
	\end{align}
	Then the assumption (6.2.8) of \cite[Theorem 6.2.3]{NourPecc12} is satisfied by the decomposition theorem of complex multiple Wiener-Ito integrals (see \cite[Theorem 3.3]{cl17}). Thus the equivalence between (i) and (ii) is obtained by \cite[Theorem 6.2.3]{NourPecc12}. Specially, from \cite[Section 3.7.6]{GRG13}, we know that $$F_n\overset{d}{\rightarrow}Z\sim\mathcal{CN}_d(0,\Sigma)$$ is equivalent to that
	\begin{equation}\label{vector convergence}
		\left(A_{n}^{1},\ldots,A_{n}^{d}, B_{n}^{1},\ldots,B_{n}^{d}\right) \overset{d}{\rightarrow}\mathcal{N}_{2d}(0,\Sigma'),
	\end{equation} where $\Sigma'$ is the $(2d)\times(2d)$ matrix defined as 
	$$
	\Sigma'=\begin{pmatrix}
		\frac{1}{2}\Sigma_{\RE} & -\frac{1}{2}\Sigma_{\IM} \\
		\frac{1}{2}\Sigma_{\IM} & \frac{1}{2}\Sigma_{\RE}
	\end{pmatrix}.
	$$
	
	By \cite[Theorem 6.2.3]{NourPecc12}, \eqref{vector convergence} is equivalent to that
	\begin{equation}\label{components convergence}
		A_{n}^{j}\overset{d}{\rightarrow}\mathcal{N}\left( 0,\frac{1}{2}\Sigma(j,j) \right),\quad B_{n}^{j}\overset{d}{\rightarrow}\mathcal{N}\left( 0,\frac{1}{2}\Sigma(j,j) \right), \quad 1\leq j\leq d.
	\end{equation} 
	Note that $\E\left[ A_n^jA_n^j\right]=\E\left[ B_n^jB_n^j\right]\rightarrow \frac{1}{2}\Sigma_{\RE}(j,j)$, $\E\left[ A_n^jB_n^j\right]\rightarrow -\frac{1}{2}\Sigma_{\IM}(j,j)=0$, using \cite[Theorem 6.2.3]{NourPecc12} again, we get that \eqref{components convergence} is equivalent that $$\left( A_{n}^{j}, B_{n}^{j}\right)\overset{d}{\rightarrow}\mathcal{N}_{2}\left( \begin{pmatrix}
		0\\0
	\end{pmatrix},\begin{pmatrix}
		\frac{1}{2}\Sigma(j,j) &0\\0 & \frac{1}{2}\Sigma(j,j)
	\end{pmatrix}\right), \quad 1\leq j\leq d,$$
	which is equivalent to that $F_n^j= A_{n}^{j}+\im B_{n}^{j}\overset{d}{\rightarrow}\mathcal{CN}\left( 0,\Sigma(j,j) \right) $ for $ 1\leq j\leq d$. Then we finish the proof. 
	{\hfill\large{$\Box$}}\\
	
	\noindent{\it Proof of Theorem~\ref{thm Pecca TUd} .\,}
	By Theorem \ref{equilent cond}, conditions (i), (iii) and circular symmetry of $I_{p,q}\left( f_{n,p,q}\right)$ imply that for fixed $p,q$, $$I_{p,q}\left( f_{n,p,q}\right)\overset{d}{\rightarrow}\mathcal{CN}(0,\sigma^2_{p,q}), \quad n\rightarrow \infty . $$
	By Theorem \ref{thm Camp}, we get that the random vector composed by $\left\lbrace I_{p,q}( f_{n,p,q}): 1\leq p+q\leq M \right\rbrace $ converges in law to that composed by $\left\lbrace N_{p,q}:\,p+q\ge 1: 1\leq p+q\leq M \right\rbrace $, where $\set{N_{p,q}:\,p+q\ge 1}$ are the centered independent complex circularly-symmetric Gaussian random variables with variance $\sigma^2_{p,q}$. For every $M\ge 1$, set 
	$$	F_n^{(M)}=\sum_{p+q\ge 1}^{M}I_{p,q}( f_{n,p,q}),\quad N^{(M)}=\sum_{p+q\ge 1}^{M} N_{p,q},\quad N=\sum_{p+q\ge 1}N_{p,q}.$$ Then $F_n^{(M)}\overset{d}{\rightarrow}N^{(M)}$. Let $g:\R^2\rightarrow\R$  such that $\norm{g}_{\mathrm{Lip}}+\norm{g}_{\infty}\leq 1$, where $$\norm{g}_{\mathrm{Lip}}=\sup_{x\neq y, x,y\in\R^2}\frac{|g(x)-g(y)|}{\norm{x-y}_{\R^2}},\quad \norm{g}_{\mathrm{\infty}}=\sup_{x\in\R^2}|g(x)|.$$ Then for every $M\ge 1$, the Fortet-Mourier distance (or say, bounded Wassersterin distance, see \cite[p. 210]{NourPecc12})  between $F_n$ and $N$ satisfies that
	\begin{align*}
		& \quad d_{\mathrm{FM}}(F_n,\,N)  \\
		&=\sup_{g} \abs{\E\left[ g(\RE F_n,\IM F_n)\right] - \E\left[ g(\RE N,\IM N)\right] }  \\
		&\le \sup_{g} \Big[ \left| \E\left[ g(\RE F_n,\IM F_n)\right] - \E\left[ g\left( \RE F_n^{(M)},\IM F_n^{(M)}\right) \right]  \right|  +\abs{\E\left[ g\left( \RE F_n^{(M)},\IM F_n^{(M)}\right) \right]  \right. \\&\left. \quad\qquad-\E\left[ g\left( \RE F_n^{(M)},\IM F_n^{(M)}\right) \right]  } +\abs{\E\left[ g\left( \RE N,\IM N\right) \right]  -\E\left[ g\left( \RE N^{(M)}, \IM N^{(M)}\right) \right]  }\Big]  \\
		&\le \sqrt{\E\left[ \left| F_n-F_n^{(M)}\right| ^2\right] }+\sup_{g}\left|\E\left[ g\left( \RE F_n^{(M)},\IM F_n^{(M)}\right) \right] -\E\left[ g\left( \RE F_n^{(M)},\IM F_n^{(M)}\right) \right] \right|\\&\quad+\sqrt{\E\left[ \left| N-N^{(M)}\right| ^2\right] } \\
		&\le  \Big(  \sum\limits_{p+q> M}  p!q!\norm{f_{n,p,q}}^2_{\FH^{\otimes (p+q)}} \Big)^{\frac12}+\Big( \sum\limits_{p+q> M} \sigma^2_{p,q}\Big) ^{\frac12}\\&\quad + \sup_{g}\left|\E\left[ g\left( \RE F_n^{(M)},\IM F_n^{(M)}\right) \right] -\E\left[ g\left( \RE F_n^{(M)},\IM F_n^{(M)}\right) \right] \right|.
	\end{align*}
	Combining the fact that $F_n^{(M)}\overset{d}{\rightarrow}N^{(M)}$ 
	and the conditions (ii) and (iv), taking first the limit as $n\to \infty$ and then taking the limit as $M\to \infty$, we get that $d_{\mathrm{FM}}(F_n,\,N) \to 0 $ as $n\to\infty$. That is to say, $ F_n \stackrel{ {law}}{\to} \mathcal{CN}(0,\sigma^2)$.
	{\hfill\large{$\Box$}}\\

	\noindent{\it Proof of Theorem~\ref{duowei b-e bound thm}.\,}
	First, please refer to \cite[Theorem 3.7.12]{GRG13} for the relationship between the non-singularity and the eigenvalue, eigenvector pairs of $\Sigma$ and those of the real
	random $2d$-vector $(\RE Z, \IM Z)' $.
	
	Next, it follows from \cite[Theorem 4.3]{NR12} that when the pseudo-covariance matrix vanishes, i.e., $\E[F^2]=0$, we have that
	\begin{align*}
		d_{W}(F, Z)\le { \frac{2\sqrt{d \lambda_{max}}}{\lambda_{min}}}\sqrt{\E\norm{F_n}^4-\E\norm{N}^4}.
	\end{align*}
	It is well known that 
	\begin{align*}
		\E\norm{N}^4= \norm{\Sigma}^2_{F}+(\mathrm{Tr} \Sigma)^2,
	\end{align*}where $\norm{\Sigma}_{F}$ is the Hilbert-Schmidt (or say Frobenius)  norm. Hence, 
	\begin{align*}
		\E\norm{F}^4-\E\norm{N}^4=\sum_{j,r=1}^d \Big\{ \mathrm{Cov}(\abs{F^j}^2,\,\abs{F^r}^2) - \left|\E\left[ F^j\bar{F^r}\right]  \right| ^2\Big\}.
	\end{align*}
	Then the upper bound \eqref{key estimate 000} is an immediate consequence of Proposition~\ref{jproo key estimate} and the fact that $\E[F^jF^r]=0$ for $1\leq j,r\leq d$ which is from the circular symmetry of $F$.
	{\hfill\large{$\Box$}}\\
	
	
	\bibliographystyle{abbrv}
	
	\bibliography{refs}

\end{document}